\newtheorem{Theorem}{Theorem}[section]
\newtheorem{Lemma}[Theorem]{Lemma}
\newtheorem{Corollary}[Theorem]{Corollary}
\newtheorem{Proposition}[Theorem]{Proposition}
\newcommand{\thref}[1]{Theorem \ref{#1}}
\newcommand{\leref}[1]{Lemma \ref{#1}}
\newcommand{\reref}[1]{Remark \ref{#1}}
\newcommand{\seref}[1]{Section \ref{#1}}
\theoremstyle{definition}
\newtheorem{Remark}[Theorem]{Remark}
\newtheorem{Example}[Theorem]{Example}
\newtheorem*{Remark*}{Remark}
\numberwithin{equation}{section}
\begin{document}

\newcommand{\pd}{\partial}
\newcommand{\res}{\mathrm{res}}
\newcommand{\alg}{\mathrm{alg}}
\newcommand{\Gr}{\mathrm{Gr}}
\newcommand{\Grad}{\mathrm{Gr^{ad}}}
\newcommand{\Ai}{\mathrm{Ai}}
\newcommand{\Span}{\mathrm{span}}
\newcommand{\clspan}{\overline{\mathrm{span}}}

\newcommand{\Rset}{\mathbb{R}}
\newcommand{\Cset}{\mathbb{C}}
\newcommand{\Nset}{\mathbb{N}}
\newcommand{\Zset}{\mathbb{Z}}
\newcommand{\cA}{\mathfrak{A}}
\newcommand{\cC}{\mathcal{C}}
\newcommand{\cF}{\mathcal{F}}
\newcommand{\cH}{\mathcal{H}}
\newcommand{\cK}{\mathcal{K}}
\newcommand{\cL}{\mathcal{L}}
\newcommand{\cM}{\mathcal{M}}
\newcommand{\cP}{\mathcal{P}}
\newcommand{\cQ}{\mathcal{Q}}
\newcommand{\cR}{\mathcal{R}}
\newcommand{\cS}{\mathcal{S}}
\newcommand{\cU}{\mathcal{U}}
\newcommand{\cV}{\mathcal{V}}

\newcommand{\sH}{\mathsf{H}}

\newcommand{\fR}{\mathfrak{R}}

\newcommand{\al}{\alpha}
\newcommand{\be}{\beta}
\newcommand{\ka}{\kappa}
\newcommand{\la}{\lambda}
\newcommand{\om}{\omega}

\newcommand{\tS}{\tilde{S}}

\title{Higher-order heat equation and the Gelfand-Dickey hierarchy}

\author[P.~Iliev]{Plamen~Iliev}
\address{School of Mathematics, Georgia Institute of Technology, 
Atlanta, GA 30332--0160, USA}
\email{iliev@math.gatech.edu}
\thanks{The author gratefully acknowledges the support of a Simons Foundation Grant \#635462 and a CRM-Simons Professorship at the Centre de Recherches Math\'ematiques, Universit\'e de Montr\'eal.}

\subjclass[2020]{Primary 37K10, Secondary 14H70, 35K30, 58J72.}

\keywords{Higher-order heat equation, Gelfand-Dickey hierarchy, Airy functions, Sato's Grassmannian, rational solutions of soliton equations, bispectrality}

\begin{abstract} 
In this paper we analyze the heat kernel of the equation $\partial_tv =\pm\mathcal{L} v$, where $\mathcal{L}=\partial_x^N+u_{N-2}(x)\partial_x^{N-2}+\cdots+u_0(x)$ is an $N$-th order differential operator and the $\pm$ sign on the right-hand side is chosen appropriately. Using formal pseudo-differential operators, we derive an explicit formula for Hadamard's coefficients in the expansion of the heat kernel in terms of the resolvent of $\mathcal{L}$. Combining this formula with soliton techniques and Sato's Grassmannian, we establish different properties of Hadamard's coefficients and relate them to the Gelfand-Dickey hierarchy. In particular, using the correspondence between commutative rings of differential operators and algebraic curves due to Burchnall-Chaundy and Krichever, we prove that the heat kernel consists of finitely many terms if and only if the operator $\mathcal{L}$ belongs to a rank-one commutative ring of differential operators whose spectral curve is rational with only one cusp-like singular point, and the coefficients $u_j(x)$ vanish at $x=\infty$. We also characterize these operators $\mathcal{L}$ as the rational solutions of the Gelfand-Dickey hierarchy with coefficients $u_j $ vanishing at $x=\infty$, or as the rank-one solutions of the bispectral problem vanishing at $\infty$. 
\end{abstract}

\maketitle

\tableofcontents

\section{Introduction} \label{se1}

The heat kernel of self-adjoint second-order operators plays an important role in spectral theory and has numerous applications in geometry and theoretical physics. During the last few years, there have been numerous probabilistic constructions analyzing the higher-order heat equation
\begin{equation}\label{1.1}
\pd_tv =\pm\pd^{N}_xv, 
\end{equation}
and associated stochastic processes, see for instance \cite{ACM,GHPD} and the references therein. Moreover, the Airy-type integrals representing the fundamental solution of the heat equation \eqref{1.1} naturally appear in the theory of two-dimensional quantum gravity and random matrices \cite{AvM,DGZ,Kontsevich} which has seen remarkable developments in the recent years. 

The more general equation 
\begin{equation}\label{1.2}
\pd_tv =\ka_N\cL v, 
\end{equation}
where $\ka_N=\pm 1$ and $\cL$ is a differential operator of arbitrary order $N$ has also appeared in different applications, from the theory of thermal grooving \cite{Mull} to trimolecular reactions \cite{Gar}. The construction of a functional integral representation of the solution of PDEs of the form in \eqref{1.2} based on the theory of infinite dimensional Fresnel integrals was obtained recently in \cite{ACM}.

In the present paper, we take a different path and study the heat kernel of equation \eqref{1.2} 
for a general $N$-th order differential operator of the form
$$\cL=\pd_x^N+u_{N-2}(x)\pd_x^{N-2}+\cdots+u_0(x)$$ 
and $\ka_N=\pm1$ by exploring connections to integrable systems.   
Recall that the fundamental solution of $\pd_tv =(\pd_x^2+u_0(x)) v$ has an asymptotic expansion of the form 
\begin{equation}\label{heat-kernel-exp}
v(x,y,t)\sim\frac{e^{-\frac{(x-y)^2}{4t}}}{\sqrt{4\pi t}}\left(1+\sum_{k=1}^{\infty}H_k(x,y)t^k\right), \qquad \text{ as }\qquad t\to 0+
\end{equation}
see \cite{MP}. The expansion on the right-hand side is characterized by the following two conditions: 
\begin{enumerate}
\item for fixed $y$, it satisfies the heat equation (as formal power series),  and 
\item the Hadamard's coefficients $H_k(x,y)$ are smooth functions in some neighborhood of the diagonal $x=y$.
\end{enumerate}
This can  be generalized in a natural way for arbitrary $N>2$ by replacing the Gaussian $\frac{1}{\sqrt{4\pi t}}e^{-\frac{(x-y)^2}{4t}}$ in \eqref{heat-kernel-exp} when $N=2$ with an appropriate Airy-type function determined from the fundamental solution of \eqref{1.1}. Using formal pseudo-differential operators, we prove that Hadamard's coefficients in the corresponding expansion of the heat kernel can be computed from the resolvent of the operator $\cL$. This provides an extension and a new derivation of a formula discovered in \cite{I05} for the Hadamard's coefficients in \eqref{heat-kernel-exp} when $\cL=\pd_x^2+u_0(x)$ is a second-order differential operator. We use this to establish different properties of Hadamard's coefficients when $N>2$ by applying soliton techniques and exploring connections with Sato's Grassmannian. For instance, the link between the values of the resolvent on the diagonal and the equations of the Gelfand-Dickey hierarchy \cite{GD} shows that we can define first integrals of the Gelfand-Dickey hierarchy in terms of the heat kernel, extending the classical theory for Schr\"odinger operators and the Korteweg-de Vries (KdV) hierarchy \cite{McKvM,Sch}. However, already when $N=3$, the values of Hadamard's coefficients on the diagonal are no longer sufficient to generate the equations in the Boussinesq hierarchy, and thus the explicit formula outside the diagonal $x=y$ can be used to determine the appropriate higher-order terms needed to define the flows. 

As another interesting application we characterize the operators $\cL$ for which the heat kernel of \eqref{1.2} consists of finitely many terms. Note that if the heat kernel  has finitely many terms, it provides an exact formula for the fundamental solution which is valid for all $t>0$. When $N=2$, the corresponding Schr\"odinger operators $\cL=\pd_x^2+u_0$ appear in many different settings and can be described in several ways: as the rational solutions of the KdV equation vanishing at $x=\infty$,  as rational Darboux transformations from the operator $\pd_x^2$, as the rank-one solutions of the bispectral problem, or within the context of Huygens’ principle \cite{AM,AMM, Be,BCE,CFV,CV,DG}. In particular, the pioneering work of Duistermaat and Gr\"unbaum \cite{DG} on the bispectral problem inspired numerous far-reaching extensions of these beautiful connections for higher-order differential operators \cite{BHY,Gr1,Gr2,Wilson}, as well as new instances of finite heat kernel expansions for discrete second-order operators \cite{GI,Ha1,I08}. The higher-order operators $\cL$  for which the heat kernel of \eqref{1.2} has finitely many terms can be naturally linked to these works, which allows us to interpret the finiteness of the heat kernel within the context of algebraic geometry, the theory of soliton equations, or bispectrality. More precisely, using the Burchnall-Chaundy-Krichever correspondence \cite{BC,Krichever,Mumford} between commutative rings of differential operators and algebraic curves, we prove that the heat kernel consists of finitely many terms if and only if the operator $\cL$ belongs to a rank-one commutative ring of differential operators whose spectral curve  is rational with only one cusp-like singular point, and the coefficients $u_j (x)$ vanish at $\infty$.  Combining this with the work of Krichever \cite{Krichever2} on the rational solutions of the Kadomtsev-Petviashvili (KP) equation and the work of Wilson \cite{Wilson} on the classification of rank-one bispectral commutative rings of differential operators, we show that the operators $\cL$ for which the heat expansion is finite can be characterized as the rational solutions of the Gelfand-Dickey hierarchy with coefficients $u_j $ vanishing at $x=\infty$, or as the rank-one solutions of the bispectral problem vanishing at $\infty$. 

The paper is organized as follows. In the next section, we introduce the Airy-type functions which appear in the fundamental solution of the heat equation \eqref{1.1}. In \seref{se3}, we define the heat kernel of \eqref{1.2} by extending the classical approach \cite{Hadamard,MP}, and we illustrate the recursive definition of Hadamard's coefficients when $N=3$. In \seref{se4}, we establish the connection between the heat kernel and the resolvent in terms of formal pseudo-differential operators, deduce the formula for Hadamard's coefficients and discuss some of their properties. We also explain how this approach leads to the formula for Hadamard's coefficients discovered in \cite{I05} when $N=2$. In \seref{se5}, we show that the values of Hadamard's coefficients on the diagonal provide first integrals for the Gelfand-Dickey hierarchy, and that by adding the first derivatives on the diagonal we can generate the equations of the Boussinesq hierarchy. In \seref{se6}, we prove that the heat kernel consists of finitely many terms if and only if the operator $\cL$ belongs to a rank-one commutative ring of differential operators whose spectral curve  is rational with only one cusp-like singular point, and the coefficients $u_j (x)$ vanish at $\infty$. This shows that all such operators are parametrized by the sub-Grassmannian $\Gr_0^{(N)}$ of Sato-Segal-Wilson Grassmannian. In \seref{se7}, we provide several explicit examples of third and higher-order operators $\cL$ having finite heat kernel expansions. In \seref{se8}, we explain how the operators $\cL$ for which the heat kernel is finite can be characterized as the rational solutions of the Gelfand-Dickey hierarchy vanishing at $\infty$, and in \seref{se9}, we briefly discuss the connection to the bispectral problem and the Darboux transformation.

\section{The fundamental solution of the heat equation $\pd_t v=\pm\pd^{N}_xv $}  \label{se2}

In this section, we discuss the fundamental solution of the heat equation
\begin{equation}\label{2.1}
\pd_tv =\ka_N\pd^{N}_xv, \quad x\in\Rset, \quad t>0,
\end{equation}
where $\ka_N=(-1)^{N/2+1}$ if $N$ is even, and $\ka_N=\pm1 $ if $N$ is odd, i.e.
\begin{equation}\label{2.2}
\ka_N=\begin{cases}
(-1)^{N/2+1} & \text {\text if $N$ is even,}\\
\pm 1 &\text{if $N$ is odd.}
\end{cases}
\end{equation}
The case when $N$ is even was studied as early as 1960 in \cite{Krylov}, and a very detailed analysis of the initial-value problem can be found in \cite{DM}. However, the integrals representing the fundamental solution have been investigated even before that in the works of Bernstein \cite{Bernstein}, Burwell \cite{Burwell} and P\'olya \cite{Polya}. For a detailed analysis of the fundamental solution and its representation in terms of hypergeometric functions, see \cite{GHPD} and the references therein. 

Below, we review briefly the construction of the fundamental solution of \eqref{2.1} and we fix the notations that will be used throughout the paper. For fixed $N$ and $\ka_N$,  we first consider the solution of  \eqref{2.1} with initial condition 
$$v|_{t=0}=\delta(x),$$ 
where $\delta$ is the Dirac delta function. By applying the Fourier transform, we can represent the solution as the integral 
$$\frac{1}{2\pi}\int_{-\infty}^{\infty}e^{ix\xi+\ka_N(i\xi)^Nt}d\xi.$$
We set $t=1$ in the last formula and we denote the corresponding Airy-type integral by $A_N(x;\ka_N)$, i.e.
\begin{equation}\label{2.3}
A_N(z;\ka_N)=\frac{1}{2\pi}\int_{-\infty}^{\infty}e^{iz\xi+\ka_N(i\xi)^N}d\xi.
\end{equation}
When $N$ is even, the integral converges absolutely and defines an entire function of $z$. When $N$ is odd, the integral does not converge absolutely, but this can be fixed by moving the integration in the complex plane over the line $c+i\Rset$, where $c$ is chosen appropriately depending on the sign of $\ka_N$. Indeed, if $N$ is odd and $\ka_N=(-1)^{(N+1)/2}$   then for fixed $z$, we take $c>0$ and we apply Goursat's theorem by integrating the entire function $e^{z\xi+\ka_N\xi^N}$ over the rectangle with vertices $\xi=\pm iR, c\pm iR$. It is easy to see that the integrals over the horizontal line segments $u\pm iR$, where $u\in[0,c]$, approach $0$ as $R\to\infty$ and therefore 
\begin{equation}\label{2.4}
A_N(z;\ka_N)=\frac{1}{2\pi i}\int_{c-i\infty}^{c+i\infty}e^{z\xi+\ka_N\xi^N}d\xi,
\end{equation}
where $c>0$ is arbitrary. Similarly, if $N$ is odd and $\ka_N=(-1)^{(N-1)/2}$, equation \eqref{2.4} holds for $c<0$. The integral in \eqref{2.4} converges absolutely and defines an entire function of $z$ when $N$ is odd. Differentiation under the integral shows that the function $A_N(z;\ka_N)$ satisfies the higher-order Airy equation
\begin{equation}\label{2.5}
A_N^{(N-1)}(z;\ka_N)=-\frac{\ka_N}{N}zA_N(z;\ka_N).
\end{equation}
With the notations above, the fundamental solution of equation \eqref{2.1}, which satisfies the initial condition
$$v|_{t=0}=\delta(x-y)$$ 
can be written as follows
\begin{equation}\label{2.6}
v(x,y,t)=\frac{1}{\sqrt[N]{t}}A_N\left(\frac{x-y}{\sqrt[N]{t}};\ka_N\right).
\end{equation}

\begin{Remark}
If $N$ is even, the sign of $\ka_N$ is fixed and we have a unique function $A_N(z;\ka_N)$.  If $N$ is odd, we have two functions which are related by
\begin{equation}\label{2.7}
A_N(z;-\ka_N)=A_N(-z;\ka_N).
\end{equation}
In order to simplify the notation, throughout the paper, we omit the explicit dependence of $\ka_N$, and we will write simply $A_N(z)$, unless $N$ is odd and the dependence on $\ka_N$ is important.
\end{Remark}

We can rewrite $A_N(z)$ in terms of real-valued functions as follows. 
\begin{enumerate}[(i)]
\begin{subequations}
\item When $N$ is even, we have 
\begin{align}
A_N(z)= 
\frac{1}{2\pi}\int_{-\infty}^{\infty}e^{isz-s^N}ds 
=\frac{1}{\pi}\int_0^{\infty}\cos (zs)e^{-s^N}\,ds.\label{2.8a}
\end{align}
\item When $N$ is odd, we have 
\begin{align}
A_N(z;\ka_N)&=\frac{1}{2\pi}\int_{-\infty}^{\infty}e^{isz+i\ka_N(-1)^{(N-1)/2}s^N}ds  \nonumber \\
&=\begin{cases}
\frac{1}{\pi}\int_0^{\infty}\cos (zs+s^N)\,ds & \text {\text if $\ka_N=(-1)^{(N-1)/2}$,}\\
\frac{1}{\pi}\int_0^{\infty}\cos (zs-s^N)\,ds & \text {\text if $\ka_N=(-1)^{(N+1)/2}$.}\label{2.8b}
\end{cases}
\end{align}
\end{subequations}
\end{enumerate}

\begin{Example}\label{Ex2.2}
If $N=2$, equation \eqref{2.8a} gives
\begin{equation}\label{2.9}
A_2(x)=\frac{1}{\pi}\int_0^{\infty}\cos (xs)e^{-s^2}\,ds=\frac{1}{\sqrt{4\pi}}e^{-x^2/4},
\end{equation}
and therefore \eqref{2.6} yields the well-known formula for the fundamental solution of the heat equation on the real line
$$v(x,y,t)=\frac{1}{\sqrt{4\pi t}}e^{-\frac{(x-y)^2}{4t}}.$$
\end{Example}

\begin{Example}\label{Ex2.4}
If $N=3$ and $\ka_3=1$, equation \eqref{2.8b} gives
\begin{equation}\label{2.10}
A_3(x;1)=\frac{1}{\pi}\int_0^{\infty}\cos (xs-s^3)\,ds = \frac{1}{\sqrt[3]{3}}\Ai\left(-\frac{x}{\sqrt[3]{3}}\right),
\end{equation}
where $\Ai(x)$ is the Airy function \cite[p.~447, equation 10.4.32]{AS}.
The formula for the  fundamental solution in this case becomes
$$v(x,y,t)=\frac{1}{\sqrt[3]{3t}}\Ai\left(-\frac{x-y}{{\sqrt[3]{3t}}}\right).$$
\end{Example}

\section{Heat kernel for a general $N$-th order operator} \label{se3}
We fix an $N$-th order differential operator $\cL$ of the form
\begin{equation}\label{3.1}
\cL=\pd_x^N+u_{N-2}(x)\pd_x^{N-2}+\cdots+u_0(x), \qquad\text{ where }\pd_x=\frac{d}{dx},
\end{equation}
i.e.  we will assume that $\cL$ is monic and that the coefficient of $\pd_x^{N-1}$ is $0$, which can always be achieved by an appropriate change of variable and a gauge transformation. 
For such operators $\cL$ we  consider the heat equation
\begin{equation}\label{3.2}
\pd_tv =\ka_N\cL v.
\end{equation}
We define the {\em  heat kernel}  of  \eqref{3.2} as the formal power series expansion
\begin{align}
\cH(x,y,t)=&\frac{1}{\sqrt[N]{t}}A_N\left(\frac{x-y}{\sqrt[N]{t}}\right)
+\sum_{j=0}^{N-2}\frac{1}{\sqrt[N]{t^{j+1}}}A_N^{(j)}\left(\frac{x-y}{\sqrt[N]{t}}\right)\left(\sum_{k=1}^{\infty}H_k^{j}(x,y)t^k\right)\nonumber\\
=&\frac{1}{\sqrt[N]{t}}A_N\left(\frac{x-y}{\sqrt[N]{t}}\right)\left(1+\sum_{k=1}^{\infty}H_k^{0}(x,y)t^k\right)\nonumber\\
&+\frac{1}{\sqrt[N]{t^2}}A_N'\left(\frac{x-y}{\sqrt[N]{t}}\right)\left(\sum_{k=1}^{\infty}H_k^{1}(x,y)t^k\right)\nonumber\\
&+\cdots+\frac{1}{\sqrt[N]{t^{N-1}}}A_N^{(N-2)}\left(\frac{x-y}{\sqrt[N]{t}}\right)\left(\sum_{k=1}^{\infty}H_k^{N-2}(x,y)t^k\right),\label{3.3}
\end{align}
where $t>0$ and the coefficients $H_k^{j}(x,y)$  are such that:
\begin{enumerate}[(i)]
\item For fixed $y$, $v(x,t)=\cH(x,y,t)$ is a solution of the heat equation \eqref{3.2};
\item The coefficients $H_k^{j}(x,y)$ are smooth functions in a neighborhood of the diagonal $y=x$.
\end{enumerate}
It is not hard to see that these two properties fix $\cH(x,y,t)$ uniquely. Indeed, if we substitute \eqref{3.3} into \eqref{3.2} and if we compare the coefficients of 
$\frac{1}{t^{(j+1)/N}}A_N^{(j)}\left(\frac{x-y}{\sqrt[N]{t}}\right)t^{k}$, where $j\in\{0,\dots,N-2\}$ and $k\in\Nset_0$, 
we obtain an equation of the form
\begin{equation}\label{3.4}
(k+1)H_{k+1}^{j}(x,y)+(x-y) \frac{\pd H_{k+1}^{j}(x,y)}{\pd x}=\cF_{kj}(H_\ell^{i}(x,y)),
\end{equation}
where the right-hand side is a function $\cF_{kj}$ of the coefficients $H_\ell^{i}(x,y)$ and their derivatives with respect to $x$, where $\ell=k+1$ and $i>j$, or $\ell=k$. Since this equation has a unique smooth solution in some neighborhood of the diagonal $y=x$, we can determine the coefficients $H_k^{j}(x,y)$ recursively as follows: 
$$H_1^{N-2}(x,y), H_1^{N-3}(x,y), \dots, H_1^{0}(x,y), H_2^{N-2}(x,y),\dots, H_2^{0}(x,y),H_3^{N-2}(x,y),\dots.$$
Similarly to the case of second-order operators \cite{MP}, we see that the truncated series in \eqref{3.3} approximate the fundamental solution of \eqref{3.2}. Therefore, we can think of the series in \eqref{3.3} as an appropriate analog of the asymptotic expansion as $t\to0+$ of the fundamental solution of the heat equation \eqref{3.2}. In particular, we classify in \seref{se6} the operators $\cL$ for which the expansion  in \eqref{3.3} has finitely many terms, so the series gives a closed formula for the fundamental solution; see \seref{se7} for several explicit examples. Following the convention in the literature in the classical case $N=2$, we will refer to the coefficients $H_k^{j}(x,y)$ as the {\em Hadamard's coefficients} for the heat equation \eqref{3.2}. 

\begin{Remark}
For every $j$ and $k$, the coefficients in the expansion of $H_k^{j}(x,y)$ about the diagonal $y=x$  are differential polynomials of the coefficients $u_0(x),\dots,u_{N-2}(x)$ of the operator $\cL$. In particular, when $N=2$, the values of Hadamard's coefficients $H_k^{0}(x,x)$ on the diagonal  provide first integrals of the KdV hierarchy and can be used to obtain all equations in the hierarchy, see \reref{re5.2}. When $N>2$, $H_k^{j}(x,x)$ are still first integrals for the $N$-th Gelfand-Dickey hierarchy as shown in \thref{th5.1}. However, the connection between the equations of the $N$-th Gelfand-Dickey hierarchy and Hadamard's coefficients is more subtle when $N>2$, see Proposition~\ref{pr5.3} for explicit formulas for the equations of the Boussinesq hierarchy in terms of the heat kernel when $N=3$.
\end{Remark}

\begin{Example}[Heat kernel for second-order operators]\label{Ex3.2}
If $N=2$, the operator in \eqref{3.1} is the Schr\"odinger operator 
$$\cL=\pd_x^2+u_0(x).$$
The function $A_2(x)$ is the Gaussian function in \eqref{2.9} and the heat kernel expansion in \eqref{3.3} becomes
\begin{equation}\label{3.5}
\cH(x,y,t)=\frac{e^{-\frac{(x-y)^2}{4t}}}{\sqrt{4\pi t}}\left(1+\sum_{k=1}^{\infty}H_k^{0}(x,y)t^k\right).
\end{equation}
The coefficients $H_k^{0}(x,y)$ in the above expansion are the classical Hadamard's coefficients which are defined recursively via the differential equations
\begin{equation}\label{3.6}
(k+1)H_{k+1}^{0}(x,y)+(x-y) \frac{\pd H_{k+1}^{0}(x,y)}{\pd x}=\cL H_{k}^{0}(x,y), \text{ for }k\in\Nset_0,
\end{equation}
where $\cL H_{k}^{0}(x,y)= \frac{\pd^2 H_{k}^{0}(x,y)}{\pd x^2}+u_0(x)H_{k}^{0}(x,y)$, and $H_{0}^{0}(x,y)=1$.
From this relation, one can compute  consecutively the coefficients in the expansion of $H_{k}^{0}(x,y)$ about the diagonal $y=x$ and express them as differential  polynomials of the potential $u_0(x)$. For instance, 
\begin{align*}
\frac{\pd^j H_{1}^{0}(x,y)}{\pd x^j}\Big|_{y=x}&=\frac{1}{j+1} u_0^{(j)}(x),\\ 
\frac{\pd^j H_{2}^{0}(x,y)}{\pd x^j}\Big|_{y=x}&=\frac{1}{(j+2)(j+3)} u_0^{(j+2)}(x)+\frac{1}{j+2}\sum_{\ell=0}^{j}\frac{1}{\ell+1}\binom{j}{\ell}u_0^{(\ell)}(x)u_0^{(j-\ell)}(x),
\end{align*} 
see \cite[Theorem 5.3]{Sch} for the general formula. Moreover, 
$$H_{1}^{0}(x,x)=u_0(x),\quad  H_{2}^{0}(x,x)=\frac{1}{6}(u_0''(x)+3u_0^2(x)),\dots$$ 
are first integrals of the KdV hierarchy.
\end{Example}

\begin{Example}[Heat kernel for third-order operators]\label{Ex3.3}
If $N=3$ and $\ka_3=1$, then $A_3(z;1)$  is the Airy function in \eqref{2.10}. For the third-order operator 
$$\cL=\pd_x^3+u_1(x)\pd_x+u_0(x),$$
the heat kernel of equation \eqref{3.2} will have the expansion
\begin{align}
\cH(x,y,t)=& \frac{1}{\sqrt[3]{3t}}\Ai\left(-\frac{x-y}{\sqrt[3]{3t}}\right)\left(1+\sum_{k=1}^{\infty}H_k^{0}(x,y)t^k\right)\nonumber\\
&\qquad-  \frac{1}{\sqrt[3]{9t^2}}\Ai'\left(-\frac{x-y}{\sqrt[3]{3t}}\right) \left(\sum_{k=1}^{\infty}H_k^{1}(x,y)t^k\right). \label{3.7}
\end{align}
The coefficients $H_k^{0}(x,y)$, $H_k^{1}(x,y)$ satisfy the differential equations
\begin{subequations}
\begin{align}
&(k+1)H_{k+1}^{1}(x,y)+(x-y) \frac{\pd H_{k+1}^{1}(x,y)}{\pd x}\nonumber\\ 
&\qquad=\cL H_{k}^{1}(x,y) + 3\frac{\pd^2 H_k^{0}(x,y)}{\pd x^2}+u_1(x) H_k^{0}(x,y),\label{3.8a}\\
&(k+1)H_{k+1}^{0}(x,y)+(x-y) \frac{\pd H_{k+1}^{0}(x,y)}{\pd x} \nonumber\\
&\qquad=\cL H_{k}^{0}(x,y) - \frac{(x-y)\pd^2 H_{k+1}^{1}(x,y)}{\pd x^2}-\frac{\pd H_{k+1}^{1}(x,y)}{\pd x}-\frac{(x-y)u_1(x)H_{k+1}^{1}(x,y)}{3},\label{3.8b}
\end{align}
\end{subequations}
where $H_{0}^{1}(x,y)=0$ and $H_{0}^{0}(x,y)=1$. Using the above equations, one can show directly by induction that the coefficients in the expansion of $H_{k}^{j}(x,y)$ about the diagonal $y=x$ are differential polynomials of $u_0(x)$ and $u_1(x)$. For instance, from \eqref{3.8a} with $k=0$ it is easy to see that 
$$H_{1}^{1}(x,x)=u_1(x),\quad \text{ and more generally }\quad\frac{\pd^j H_{1}^{1}(x,y)}{\pd x^j}\Big|_{y=x}=\frac{1}{j+1} u_1^{(j)}(x).$$ 
Using now \eqref{3.8b} with $k=0$ one can deduce that 
$$H_{1}^{0}(x,x)=u_0(x)-\frac{u_1'(x)}{2}, \qquad \frac{\pd H_{1}^{0}(x,y)}{\pd x}\Big|_{y=x}=\frac{1}{2} u_0'(x)-\frac{1}{3}u_1''(x) -\frac{1}{6}u_1^2(x), $$
and so on.
\end{Example}

\begin{Remark}\label{re3.4}
When $N$ is odd, we can easily connect the heat kernel of the equation $\pd_tv =\ka_N\cL v$ to the heat kernel of $\pd_tv =-\ka_N\cL v$, by formally replacing $t$ with $-t$. More precisely, if $\cH(x,y,t;\ka_N)$ and $\cH(x,y,t;-\ka_N)$  denote the heat kernels of the first and the second equation, respectively, then from \eqref{2.7} it follows that 
$$\cH(x,y,t;\ka_N)=-\cH(x,y,-t;-\ka_N).$$ 
For the coefficients $H_k^{j}$, where $k\in\Nset$ and $j\in\{0,\dots,N-2\}$, this means that 
$$H_k^{j}(x,y;\ka_N)=(-1)^{k}H_k^{j}(x,y;-\ka_N).$$ 
\end{Remark}

\section{Hadamard's coefficients and the resolvent} \label{se4}
We start by reviewing some of the basic properties of formal pseudo-differential operators, using the standard notations in the literature \cite{Dickey,PvM}. 
A formal pseudo-differential operator is a formal series of the form 
\begin{equation}\label{4.1}
R=\sum_{i=-\infty}^mr_i(x)\pd_x^i,
\end{equation}
where $m\in\Zset$ and $r_i(x)$ are functions of $x$. The product of two operators is defined by the following generalization of the Leibniz rule:
\begin{equation*}
\pd_x^k\cdot r(x)=\sum_{i=0}^{\infty}\binom{k}{i}\frac{d^ir(x)}{dx^i}\,\pd_x^{k-i}, \qquad\text{ where }k\in\Zset.
\end{equation*}
It is straightforward to check that, with the product defined above, the set of pseudo-differential operators becomes an associative ring. An important feature in this ring is that we can define an $m$-th root and an inverse for any pseudo-differential operator $R$ of the form \eqref{4.1} with leading coefficient $r_m=1$. Indeed, one can show that there exist unique 
pseudo-differential operators $R^{1/m}$ and $R^{-1}$ of the form
\begin{align*}
R^{1/m}=\pd_x+\sum_{i\leq0}p_i(x)\pd_x^i, \qquad R^{-1}=\pd_x^{-m}+\sum_{j<-m}q_j(x)\pd_x^j,
\end{align*}
such that
\begin{equation*}
R=\left(R^{1/m}\right)^m \qquad {\mathrm{and}}\qquad RR^{-1}=1.
\end{equation*}
Moreover, the operators $R$, $R^{-1}$, and $R^{1/m}$ commute with each other. For a formal pseudo-differential operator $R$ we write $R_+$ for 
the {\it differential operator part}
\begin{equation*}
R_+=\sum_{i\geq 0}r_i(x)\pd_x^i,
\end{equation*}
and $R_-$ for the {\it integral (or Volterra) part\/}
\begin{equation*}
R_-=\sum_{i<0}r_i(x)\pd_x^i.
\end{equation*}
Within the ring of formal pseudo-differential operators we can  represent the operator $\cL$ in \eqref{3.1} in the dressing form 
\begin{equation}\label{4.2}
\cL=S\pd_x^N S^{-1},
\end{equation}
where $S=S(x,\pd_x)$ is a formal pseudo-differential operator of the form
\begin{equation}\label{4.3}
S=\sum_{k=0}^{\infty}\psi_k(x)\pd_x^{-k}, \quad \psi_0(x)=1.
\end{equation}
The operator $S$ is determined uniquely from $\cL$, up to right multiplication by a constant coefficient  operator $S_0=1+\sum_{j=1}^{\infty}c_j\pd_x^{-j}$. 

The adjoint antiautomorphism on the ring of formal pseudo-differential operators is defined by setting $\pd_x^*=-\pd_x$, and we denote by $R^*$ the adjoint of an operator $R$. Explicitly, if $R$ is the operator in \eqref{4.1}, then 
$$R^*=\sum_{i=-\infty}^m (-\pd_x)^i\cdot r_i(x).$$
Since $\pd_xe^{xz}=ze^{xz}$, we can define $\pd_x^{j}e^{xz}=z^{j}e^{xz}$ for all $j\in\Zset$. The stationary wave function $\Psi(x,z)$ and the adjoint wave function $\Psi^*(x,z)$ are defined by applying $S$ and $(S^{*})^{-1}$ to $e^{xz}$ and $e^{-xz}$, respectively:
\begin{subequations}\label{4.4}
\begin{align}
\Psi(x,z)&= S e^{xz}=\left(\sum_{k=0}^{\infty}\psi_k(x) z^{-k}\right)e^{xz}\label{4.4a}\\
\Psi^*(x,z)&= (S^*)^{-1} e^{-xz} =\left(\sum_{k=0}^{\infty}\psi^*_k(x) z^{-k}\right)e^{-xz}.\label{4.4b}
\end{align}
\end{subequations}
From equations \eqref{4.2} and \eqref{4.4} it follows that 
\begin{equation}\label{4.5}
\cL\Psi(x,z)=z^N\Psi(x,z) \quad\text{ and }\quad\cL^*\Psi^*(x,z)=z^N\Psi^*(x,z).
\end{equation}
We denote by  $\fR(\cL;x,y,z)$, or simply by $\fR(\cL)$,  the product of $\Psi(x,z)\Psi^*(y,z)$, i.e.
\begin{equation}\label{4.6}
\fR(\cL)=\Psi(x,z)\Psi^*(y,z)=\left(\sum_{n=0}^{\infty}\frac{\om_n(x,y)}{z^n}\right)e^{(x-y)z}.
\end{equation}
\begin{Remark}
Note that while the operator $S$ is determined up to a multiplication by a formal pseudo-differential operator $S_0=1+\sum_{j=1}^{\infty}c_j\pd_x^{-j}$ with constant coefficients on the right, the product $\Psi(x,z)\Psi^*(y,z)$ is uniquely determined by $\cL$, and thus the coefficients $\om_n(x,y)$ are well-defined and depend only on $\cL$. 
From \eqref{4.5} it follows that $\fR(\cL)$ satisfies the equations
\begin{align*}
&(\cL -z^N)\,\fR(\cL) =0,\\
&(\cL_y^{*}-z^N) \,\fR(\cL)= 0,
\end{align*}
where $\cL_y^{*} $ means that the operator $\cL^{*}$ acts on the variable $y$. Recall that an integral operator whose kernel satisfies the above equations is called the {\em resolvent} of the operator $\cL$, cf. \cite[Section 3]{GD} or \cite[Section 1.8.5]{Dickey}. In most applications, the resolvent is considered only on the diagonal $y=x$, which allows to write everything as differential polynomials of the coefficients $u_0(x),\dots,u_{N-2}(x)$ of the operator $\cL$. However, in order to describe the properties of Hadamard's coefficients, we will need also the non-diagonal values of $\om_n(x,y)$.
\end{Remark}

Now we want to associate a formal pseudo-differential operator with the heat kernel \eqref{3.3} and to connect it to the above constructions.
Since $A_N(z)$  solves the Airy equation \eqref{2.5}, it follows that the fundamental solution $v(x,y,t)=\frac{1}{\sqrt[N]{t}}A_N\left(\frac{x-y}{\sqrt[N]{t}}\right)$ of the heat equation \eqref{2.1}, satisfies the equation
\begin{equation}\label{4.7}
t \pd_x^{N-1} \frac{1}{\sqrt[N]{t}}A_N\left(\frac{x-y}{\sqrt[N]{t}}\right) = -\frac{\ka_N(x-y)}{N} \frac{1}{\sqrt[N]{t}}A_N\left(\frac{x-y}{\sqrt[N]{t}}\right).
\end{equation}
This suggests that we can interpret  $tv(x,y,t)$ as $ -\frac{\ka_N}{N}\pd_x^{-N+1} (x-y)\cdot v(x,y,t)$ and rewrite the heat kernel $\cH(x,y,t)$ in \eqref{3.3} as
\begin{equation}\label{4.8}
\cH(x,y,t)= H (x,y,\pd_x)\cdot \frac{1}{\sqrt[N]{t}}A_N\left(\frac{x-y}{\sqrt[N]{t}}\right),
\end{equation}
where
\begin{equation}\label{4.9}
H (x,y,\pd_x)= 1+ \sum_{j=0}^{N-2}\sum_{k=1}^{\infty}H_k^{j}(x,y)\frac{\ka_N^k}{N^k} \pd_x^{j} D_N^k,
\end{equation}
and 
\begin{equation}\label{4.10}
D_N= -\pd_x^{-N+1} (x-y) = -(x-y)\pd_x^{-N+1}  +(N-1)\pd_x^{-N}. 
\end{equation}

With the above notations, we can establish the following relation between the heat kernel and the resolvent.

\begin{Theorem}\label{th4.2}
If we expand the operator $H (x,y,\pd_x)$ in \eqref{4.9} in powers of $\pd_x^{-1}$ we have 
\begin{equation}\label{4.11}
H (x,y,\pd_x)=\sum_{n=0}^{\infty}\om_n(x,y)\pd_x^{-n}.
\end{equation}
\end{Theorem}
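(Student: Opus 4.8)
The plan is to identify both sides of \eqref{4.11} as coefficients in the same generating identity relating the heat kernel to the fundamental solution of \eqref{2.1}, and then to match them via the uniqueness of the Hadamard's coefficients. The starting point is the observation that, by \eqref{4.8}, the heat kernel factors as $H(x,y,\pd_x)$ applied to $v_0(x,y,t):=\frac{1}{\sqrt[N]{t}}A_N\!\left(\frac{x-y}{\sqrt[N]{t}}\right)$, where $\pd_x^{-N+1}$ acting on $v_0$ is interpreted via \eqref{4.7}, i.e.\ $tv_0 = -\frac{\ka_N}{N}\pd_x^{-N+1}\cdot(x-y)v_0$. First I would show that the pseudo-differential operator $R:=\sum_{n\ge0}\om_n(x,y)\pd_x^{-n}$ — which is nothing but $\fR(\cL)$ with the exponential factor $e^{(x-y)z}$ stripped off, interpreted as a pseudo-differential operator in $\pd_x$ via $\pd_x^j e^{(x-y)z}=z^j e^{(x-y)z}$ — when applied to $v_0$ produces a series of the form \eqref{3.3}; that is, $R\cdot v_0$ is a sum of the basis functions $\frac{1}{\sqrt[N]{t^{j+1}}}A_N^{(j)}\!\left(\frac{x-y}{\sqrt[N]{t}}\right)t^k$ with coefficients that are differential polynomials in the $u_i$, hence smooth near the diagonal.

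The key computation is to verify that $v(x,y,t):=R\cdot v_0(x,y,t)$ solves the heat equation \eqref{3.2}. For this I would work on the level of the defining property of $\fR(\cL)$: since $(\cL-z^N)\fR(\cL)=0$, stripping the exponential gives the operator identity $\cL\cdot R = R\cdot\pd_x^N$ in the ring of formal pseudo-differential operators (here $\cL$ acts on the left, $\pd_x^N$ on the right, with both sides expanded in $\pd_x^{-1}$ after the substitution $\pd_x e^{(x-y)z}=ze^{(x-y)z}$). Applying both sides to $v_0$ and using that $\pd_x^N v_0 = \ka_N^{-1}\pd_t v_0$ (which is just \eqref{2.1} written for the fundamental solution \eqref{2.6}, noting $v_0$ is a rescaling of $A_N$), one gets $\cL(R\cdot v_0) = \ka_N^{-1}\pd_t(R\cdot v_0)$, i.e.\ exactly \eqref{3.2}. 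The one subtlety here — and I expect this to be the main obstacle — is that $R\cdot v_0$ is an infinite series in two senses (the $\pd_x^{-n}$ expansion of $R$, and the resulting $t^k$ expansion), and one must make sure the manipulations ``apply $\pd_x^{\pm1}$ under the sum'' are legitimate as identities of formal series; the anchoring fact that makes it work is \eqref{4.7}, which converts negative powers $\pd_x^{-N+1}$ acting on $v_0$ into multiplication by $-\frac{\ka_N}{N}(x-y)t$ (up to lower-order corrections from \eqref{4.10}), so that each application of $D_N$ raises the power of $t$ by one and the $t$-expansion is well-ordered; this is precisely why the ansatz \eqref{4.9}--\eqref{4.10} was set up the way it was.

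With those two facts in hand — $R\cdot v_0$ has an expansion of the form \eqref{3.3}, and it solves \eqref{3.2} — the uniqueness statement established just after \eqref{3.4} (the recursion \eqref{3.4} has a unique smooth solution near the diagonal, with initial data $H_0^0=1$, $H_0^j=0$ for $j>0$) forces $R\cdot v_0 = \cH(x,y,t)$. Finally, I would compare this with the representation \eqref{4.8}: both $R\cdot v_0$ and $H(x,y,\pd_x)\cdot v_0$ equal $\cH(x,y,t)$, and since the map sending a pseudo-differential operator $\sum a_n(x,y)\pd_x^{-n}$ to its action on $v_0$ (expanded in the linearly independent functions $A_N^{(j)}(\cdot)t^{k}$) is injective — this injectivity follows from the Airy equation \eqref{2.5} reducing $A_N^{(j)}$ for $j\ge N-1$ back to lower derivatives, making the leading term in each ``slot'' pin down $a_n$ — we conclude $R = H(x,y,\pd_x)$ as formal pseudo-differential operators, which is \eqref{4.11}. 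One must double-check the normalization $\om_0(x,y)=1$, which follows from $\psi_0=\psi_0^*=1$ in \eqref{4.4}, matching the constant term $1$ in \eqref{4.9}.
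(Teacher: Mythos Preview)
Your overall strategy coincides with the paper's: define $R=\sum_{n\ge0}\om_n(x,y)\pd_x^{-n}$, show that $\tilde\cH:=R\cdot v_0$ solves the heat equation \eqref{3.2} and has an expansion of the form \eqref{3.3} with coefficients smooth on the diagonal, then invoke the uniqueness from \seref{se3}. Your verification of the heat equation via the operator identity $\cL R=R\pd_x^N$ is correct and is equivalent to the paper's argument using the factorization $R=S_x\tS_y$ with $\tS_y$ commuting with $\pd_x$.

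However, there is a genuine gap in your smoothness step. You assert that the coefficients of $R\cdot v_0$ in the basis $\frac{1}{\sqrt[N]{t^{j+1}}}A_N^{(j)}(\cdot)\,t^k$ are ``differential polynomials in the $u_i$, hence smooth near the diagonal,'' but this is precisely what is \emph{not} automatic. The change of basis from $\{\pd_x^{-n}\}_{n\ge0}$ to $\{1,\pd_x^j D_N^k\}_{j,k}$ has entries in $\Cset[(x-y)^{\pm1}]$ (cf.\ \eqref{4.17}), so smooth $\om_n(x,y)$ can a priori produce $H_k^j(x,y)$ with poles on $y=x$. Concretely, on the diagonal one computes $\pd_x^j D_N^k\big|_{y=x}=N^k\bigl(1-\tfrac{j+1}{N}\bigr)_k\,\pd_x^{\,j-kN}$, so as $j$ ranges over $\{0,\dots,N-2\}$ and $k\ge1$ the exponents $j-kN$ hit every negative integer \emph{except} those $\equiv -1\bmod N$. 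Hence $R|_{y=x}$ can be written in the form \eqref{4.9} with finite coefficients at $y=x$ only if
\[
\om_n(x,x)=0\qquad\text{whenever }n\equiv 1\ (\mathrm{mod}\ N).
\]
This is the heart of the paper's proof, and it is established by the residue identity \eqref{4.15}:
\[
\om_{\ell N+1}(x,x)=\res_z\bigl[z^{\ell N}\Psi(x,z)\Psi^*(x,z)\bigr]=\res_{\pd_x}\bigl[\cL^{\ell}SS^{-1}\bigr]=\res_{\pd_x}\cL^{\ell}=0.
\]
Without this computation your smoothness claim is unsupported, and the uniqueness argument from \eqref{3.4} cannot be applied; your injectivity step at the end then has nothing to anchor to. Everything else in your outline is fine, but this lemma is not a technicality---it is the substantive content of the proof.
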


\begin{Remark}\label{re4.3}
On the left-hand side of \eqref{4.11}, which is defined in \eqref{4.9}, it is important to treat $x$ and $y$ as independent variables until the powers of $\pd_x^{-1}$ are pulled to the right. In particular, this is how we interpret this relation on the diagonal $y=x$.
\end{Remark}

\begin{proof}
Let 
\begin{equation}\label{4.12}
R=\sum_{n=0}^{\infty}\om_n(x,y)\pd_x^{-n}.
\end{equation}
If we set $\tilde{\cH}(x,y,t)=R\, \frac{1}{\sqrt[N]{t}}A_N\left(\frac{x-y}{\sqrt[N]{t}}\right)$, then the proof will follow if we can show that:
\begin{enumerate}[(i)]
\item For fixed $y$, the function  $\tilde{\cH}(x,y,t)$ satisfies the heat equation \eqref{3.2};
\item In the expansion \eqref{3.2}, the coefficients $H_k^{j}(x,y)$ are smooth functions in some neighborhood of the diagonal $y=x$.
\end{enumerate}
From  \eqref{4.4} and \eqref{4.6} it follows that $R=S_x\tS_y$, where $S_x=\sum_{k=0}^{\infty}\psi_k(x)\pd_x^{-k}$ is the wave operator in \eqref{4.3}, and 
$\tS_y=\sum_{k=0}^{\infty}\psi_k^{*}(y)\pd_x^{-k}$ is an operator with coefficients which are independent of $x$. Therefore,
\begin{align*}
\frac{\pd \tilde{\cH}(x,y,t)}{\pd t}&=S_x\tS_y \frac{\pd }{\pd t} \left(\frac{1}{\sqrt[N]{t}}A_N\left(\frac{x-y}{\sqrt[N]{t}}\right)\right) 
=\ka_NS_x\tS_y \,\pd_x^N\,\left(\frac{1}{\sqrt[N]{t}}A_N\left(\frac{x-y}{\sqrt[N]{t}}\right)\right)\\
&=\ka_N(S_x \pd_x^{N} S_x^{-1}) R\,\frac{1}{\sqrt[N]{t}}A_N\left(\frac{x-y}{\sqrt[N]{t}}\right)=\ka_N\cL \tilde{\cH}(x,y,t),
\end{align*}
proving (i). Now, we  look at the limit $y\to x$ and we want to show that \eqref{4.11} will define smooth functions for Hadamard's coefficients on the diagonal.

For $j\in \{0,\dots,N-2\}$ note that 
$$\pd_x^{j} D_N=-\pd_x^{j-N+1}\,(x-y)=-(x-y)\pd_x^{j-N+1}-(j-N+1)\pd_x^{j-N},$$ 
and therefore
$$\pd_x^{j} D_N=-(j-N+1)\pd_x^{j-N}, \qquad \text{ when $y=x$}.$$ 
By induction on $k$, it follows that
\begin{align*}
\pd_x^{j} D_N^{k}&=(-1)^{k}\left(\prod_{\ell=1}^{k}(j-\ell N+1)\right)\pd_x^{j-kN}, && \text{ when $y=x$}\\
&=N^k \left(1-\frac{j+1}{N}\right)_k\,\pd_x^{j-kN},  &&\text{ when $y=x$,}\\
\end{align*}
where 
$$(a)_k = a(a+1)\ldots(a+k-1)$$ 
is the Pochhammer symbol. Thus, for $y=x$, equation \eqref{4.9} yields
\begin{equation}\label{4.13}
H (x,x,\pd_x)= 1+ \sum_{j=0}^{N-2}\sum_{k=1}^{\infty}H_k^{j}(x,x) \ka_N^k  \left(1-\frac{j+1}{N}\right)_k\,\pd_x^{j-kN}.
\end{equation}
Note that the values $H_k^{j}(x,x)$ of Hadamard's coefficients on the diagonal are determined uniquely from the coefficients of $\pd_x^{-n}$ in the operator $H (x,x,\pd_x)$ for $n\not\equiv 1\mod N$, while the coefficients of $\pd_x^{-n}$ with  $n\equiv 1\mod N$ are zero.  Thus, if we can show that in the expansion in \eqref{4.12}, or equivalently in \eqref{4.6}, we have 
\begin{equation}\label{4.14}
\om_n(x,x)=0 \qquad \text { when }n\equiv 1 \mod N,
\end{equation}
it will follow that for $y=x$, the right-hand side of \eqref{4.11} will fix smooth values of the coefficients $H_k^{j}(x,y)$ on the diagonal.
For arbitrary pseudo-differential operators $P$ and $Q$, we have 
\begin{equation}\label{4.15}
\res_z\left[(P e^{xz})\, (Q^{*} e^{-xz})\right]=\res_{\pd_x}PQ,
\end{equation}
see for instance \cite[page~92, 6.2.5]{Dickey}. Using this formula and equations \eqref{4.4}-\eqref{4.6} we see that for $\ell\in\Nset_0$ we have
\begin{align*}
\om_{\ell N+1}(x,x)&=\res_z\left[z^{\ell N} \Psi(x,z) \Psi^*(x,z)\right]=\res_z\left[(\cL^{\ell} S e^{xz} )(S^{-1})^*e^{-xz}\right]\\
&=\res_{\pd_x}\left[\cL^{\ell} S S^{-1}\right]=\res_{\pd_x}\left[\cL^{\ell}\right]=0,
\end{align*}
showing \eqref{4.14} and completing the proof.
\end{proof}

As an immediate corollary of \eqref{4.11}, we obtain an explicit formula for Hadamard's coefficients in terms of the coefficients $\om_n(x,y)$ in the resolvent.

\begin{Theorem}\label{th4.4}
For $k\in\Nset$ and $j\in\{0,\dots,N-2\}$ we have
\begin{equation}\label{4.16}
H_k^{j}(x,y)=(-\ka_N N)^k \sum_{n=1}^{k(N-1)-j}\om_n(x,y)\; \res_{\pd_x}\left[\pd_x^{-n}\left(\frac{1}{x-y}\pd_x^{N-1}\right)^k\pd_x^{-j-1}\right].
\end{equation}
\end{Theorem}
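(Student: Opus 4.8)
The plan is to invert the operator identity \eqref{4.11} algebraically, by pairing both sides against a suitable pseudo-differential operator and taking a residue in $\pd_x$. The heuristic is that the building blocks $\pd_x^{j}D_N^{k}$ appearing in \eqref{4.9} become essentially orthogonal under the bilinear form $(A,B)\mapsto\res_{\pd_x}[AB]$ once one multiplies by the correct power of $D_N^{-1}$, so that this pairing isolates a single coefficient $H_k^{j}(x,y)$.

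First I would introduce the differential operator $P_N=\frac{1}{x-y}\,\pd_x^{N-1}$ and observe, directly from \eqref{4.10}, that $P_ND_N=D_NP_N=-1$; hence $P_N=-D_N^{-1}$, so that $\big(\frac{1}{x-y}\pd_x^{N-1}\big)^k=P_N^{k}=(-1)^kD_N^{-k}$ and $D_N^{-m}=(-1)^mP_N^m$ for every $m\in\Zset$. The key structural input is then: for every $m\ge 1$ the operator $P_N^m$ is a \emph{differential} operator all of whose terms have order at least $N-1$. This follows by induction on $m$, writing $P_N^m=\frac{1}{x-y}\pd_x^{N-1}P_N^{m-1}$ and noting that composing with $\pd_x^{N-1}$ on the left produces, via the Leibniz rule, only terms of order $\ge$ the minimal order of $P_N^{m-1}$, while left multiplication by $\frac{1}{x-y}$ leaves orders unchanged. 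Consequently, for $0\le j\le N-2$ the operator $P_N^m\pd_x^{-j-1}$ has all its terms of order $\ge N-2-j\ge 0$, i.e.\ it is again a differential operator, and therefore $\res_{\pd_x}\big[\pd_x^{j'}P_N^m\pd_x^{-j-1}\big]=0$ for all $m\ge 1$ and all $j'\ge 0$, since prepending $\pd_x^{j'}$ keeps every order $\ge 0$.

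With these facts in place, I would fix $k\in\Nset$ and $j\in\{0,\dots,N-2\}$, multiply \eqref{4.11} on the right by $\big(\frac{1}{x-y}\pd_x^{N-1}\big)^k\pd_x^{-j-1}=(-1)^kD_N^{-k}\pd_x^{-j-1}$, and apply $\res_{\pd_x}$. On the left-hand side, since each $\om_n(x,y)$ multiplies on the left, one gets $\sum_{n\ge 0}\om_n(x,y)\,\res_{\pd_x}\big[\pd_x^{-n}P_N^{k}\pd_x^{-j-1}\big]$; the $n=0$ term vanishes because $P_N^k\pd_x^{-j-1}$ is a differential operator, and for $n>k(N-1)-j$ the operator $\pd_x^{-n}P_N^{k}\pd_x^{-j-1}$ has order $<-1$, hence no $\pd_x^{-1}$‑term, which explains the range of summation in \eqref{4.16}. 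On the right-hand side one obtains, from \eqref{4.9} and the relations above, the $1$‑term $\res_{\pd_x}[P_N^k\pd_x^{-j-1}]=0$ plus a sum of terms proportional to $\res_{\pd_x}\big[\pd_x^{j'}D_N^{\,k'-k}\pd_x^{-j-1}\big]$ with $j'\in\{0,\dots,N-2\}$ and $k'\ge 1$. For $k'>k$ this operator has order $j'-(k'-k)(N-1)-j-1\le -2$, so its residue is $0$; for $k'<k$ one has $D_N^{\,k'-k}=(-1)^{k-k'}P_N^{\,k-k'}$ and the residue vanishes by the differential-operator observation; for $k'=k$ the operator is just $\pd_x^{\,j'-j-1}$, with residue $\delta_{j,j'}$. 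Hence the right-hand side collapses to $(-1)^k\frac{\ka_N^{k}}{N^{k}}H_k^{j}(x,y)$, and solving for $H_k^{j}(x,y)$, using $\ka_N^2=1$, gives exactly \eqref{4.16}.

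The main obstacle — and essentially the only step that is not bookkeeping — is the claim in the second paragraph that $P_N^m$ is a differential operator with all terms of order $\ge N-1$; this is what makes the residue pairing so clean. Everything else is sign- and index-tracking, and the truncation of the sum over $n$ is forced purely by degree counting. One should still check the borderline cases carefully (the terms with $k'=k$ but $j'\ne j$, and the case $N=2$, where only $j=0$ occurs and one should recover the formula of \cite{I05}), but no new idea is needed there.
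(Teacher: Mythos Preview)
Your argument is correct and follows essentially the same route as the paper's proof: both hinge on the orthogonality relation $\res_{\pd_x}\big[\pd_x^{\ell}D_N^{m}\pd_x^{-j-1}\big]=\delta_{m0}\delta_{j\ell}$ for $j,\ell\in\{0,\dots,N-2\}$ and $m\in\Zset$, obtained by observing that for $m>0$ the operator has order $\le -2$ while for $m<0$ it is a genuine differential operator, and then pair \eqref{4.11} against $D_N^{-k}\pd_x^{-j-1}$. Your explicit identification $P_N=-D_N^{-1}$ and the inductive verification that every term of $P_N^{m}$ has order $\ge N-1$ simply spell out the $m<0$ case in more detail; the only cosmetic slip is that you swap ``left'' and ``right'' when referring to the two sides of \eqref{4.11}.
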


\begin{proof}
For $j,\ell\in\{0,1,\dots,N-2\}$ and $m\in\Zset$ it is easy to see that 
\begin{equation*}
\pd_x^{\ell}D_N^{m}\pd_x^{-j-1}=
\begin{cases}
\text{a pseudo-differential operator of the form }\sum_{i\leq -2}r_i\pd_x^{i} & \text{ if }m>0,\\
\text{a differential operator} & \text{ if }m<0,
\end{cases}
\end{equation*}
hence 
\begin{equation*}
 \res_{\pd_x}\left[\pd_x^{\ell}D_N^{m}\pd_x^{-j-1}\right]=\delta_{m0}\delta_{j\ell}.
\end{equation*}
The proof follows immediately by combining the last equation with \eqref{4.9} and \eqref{4.11}.
\end{proof}

The formula in \thref{th4.4} can be useful in practice when the coefficients $\om_n(x,y)$ can be computed explicitly and, in particular, for operators $\cL$ parametrized by the points of Sato-Segal-Wilson Grassmannian $\Gr$, which include the operators corresponding to the soliton solutions and the more general algebro-geometric solutions of the Gelfand-Dickey hierarchy. We explore these connections in the next sections. Note that if we expand $\left(\frac{1}{x-y}\pd_x^{N-1}\right)^k$, pull $\pd_x$ to the right and compute the residue in \eqref{4.16} we obtain a formula for Hadamard's coefficients of the form
\begin{equation}\label{4.17}
H_k^{j}(x,y)=\frac{\sum_{n=1}^{k(N-1)-j}b_{k,j,n}(x-y)^{n-1}\,\om_{n}(x,y)}{(x-y)^{kN-j-1}},
\end{equation}
where the coefficients $b_{k,j,n}$ depend on $k,j,n$ and the fixed quantities $N$ and $\ka_N$. We perform these calculations in the classical case $N=2$ in the next example which leads  to a new derivation of the formula discovered in \cite{I05}.

\begin{Example}
We fix in this example $N=2$ and therefore $\ka_2=1$. By induction on $k$ it is easy to see that 
\begin{equation*}
\left(\frac{1}{x-y}\pd_x\right)^k=(-1)^k\sum_{\ell=0}^{k-1}\frac{(-1)^{\ell+1}(k-\ell)_{k-1}}{2^{k-\ell-1}\,\ell! }\frac{1}{(x-y)^{2k-\ell-1}}\,\pd_x^{\ell+1}.
\end{equation*}
Plugging this into \eqref{4.16} with $j=0$ yields 
\begin{equation}\label{4.18}
H_{k}^{0}(x,y)= \sum_{n=1}^{k}\frac{(-1)^n\,\om_{n}(x,y)}{(x-y)^{2k-n}} \sum_{\ell=n-1}^{k-1}2^{\ell+1}\,\frac{(k-\ell)_{k-n+\ell}}{\ell!} \binom{-n}{-n+\ell+1}.
\end{equation}
Note that 
$$\frac{(k-\ell)_{k-n+\ell}}{\ell!} \binom{-n}{-n+\ell+1}=\frac{(n)_{2k-2n}}{(k-n)!}\, (-1)^{\ell-n+1}\binom{k-n}{\ell-n+1},$$
and if we change the summation index by setting $\ell=n-1+j$, we obtain
\begin{align*}
 \sum_{\ell=n-1}^{k-1}2^{\ell+1}\,\frac{(k-\ell)_{k-n+\ell}}{\ell!} \binom{-n}{-n+\ell+1}&=2^{n}\,\frac{(n)_{2k-2n}}{(k-n)!} \sum_{j=0}^{k-n}(-2)^j\binom{k-n}{j} \\
 &=2^{n}\,\frac{(n)_{2k-2n}}{(k-n)!} (1-2)^{k-n}.
\end{align*}
Substituting the last equation in \eqref{4.18}, we see that 
\begin{equation}\label{4.19}
H_{k}^{0}(x,y)=(-1)^k\, \sum_{n=1}^{k} \frac{2^{n}(n)_{2k-2n}}{(k-n)!}\,\frac{\om_{n}(x,y)}{(x-y)^{2k-n}},
\end{equation}
leading to a new proof of \cite[Theorem~3.1]{I05}.
\end{Example}

From \eqref{4.6}, it follows that the resolvents of the operators $\cL$ and $(-1)^N\cL^*$ can be related if we exchange the roles of $x$ and $y$, and if we replace $z$ by $-z$, i.e.
\begin{equation} \label{4.20}
\fR(\cL;x,y,z)=\fR((-1)^N\cL^*;y,x,-z).
\end{equation}

Combining the last formula with the explicit connection between the resolvent and the heat kernel, we see that exchanging the roles $x$ and $y$ relates Hadamard's coefficients of the heat equation \eqref{3.2} to Hadamard's coefficients of the dual heat equation
\begin{equation}\label{4.21}
\pd_t v = \ka_N \cL^{*}v.
\end{equation}

\begin{Corollary}\label{Co4.6}
Hadamard's coefficients $H_n^{j}(x,y)$ of the heat equation \eqref{3.2} are related to Hadamard's coefficients $\tilde{H}_n^{j}(x,y)$ of  equation \eqref{4.21} via
\begin{equation}\label{4.22}
H_n^{j}(x,y)=(-1)^{j}\tilde{H}_n^{j}(y,x).
\end{equation}
\end{Corollary}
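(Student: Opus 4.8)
The plan is to extract \eqref{4.22} as a direct consequence of the duality relation \eqref{4.20} together with the explicit link between the heat kernel and the resolvent established in \thref{th4.2} (and its corollary \thref{th4.4}). The key observation is that \eqref{4.20} says precisely that if $\om_n(x,y)$ are the resolvent coefficients of $\cL$, defined by \eqref{4.6}, and $\tilde\om_n(x,y)$ are the resolvent coefficients of $(-1)^N\cL^*$, then comparing the expansions $\left(\sum_n \om_n(x,y)z^{-n}\right)e^{(x-y)z}$ and $\left(\sum_n \tilde\om_n(y,x)(-z)^{-n}\right)e^{-(y-x)z}$ forces $\om_n(x,y)=(-1)^n\tilde\om_n(y,x)$. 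So I would first record this elementary identity on the resolvent coefficients.

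Next I would feed this into the formula of \thref{th4.4}. Write \eqref{4.16} for $H_n^{j}(x,y)$ in terms of $\om_m(x,y)$ and for $\tilde H_n^{j}(x,y)$ in terms of $\tilde\om_m(x,y)$; here I must be careful that $\cL^*$ has $\ka_N$ unchanged, so the sign prefactor $(-\ka_N N)^n$ is the same in both formulas (note also that $(-1)^N\cL^*$ and $\cL^*$ have the same resolvent coefficients up to the $(-1)^m$ twist already accounted for, since $\fR$ depends on the operator only through its eigenfunctions — alternatively one absorbs the $(-1)^N$ into the $z\mapsto -z$ substitution, which is exactly what \eqref{4.20} encodes). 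Substituting $\om_m(x,y)=(-1)^m\tilde\om_m(y,x)$ into \eqref{4.16} and replacing $x-y$ by $-(y-x)$ in the residue term, I would then check that the combined sign is $(-1)^j$: the residue $\res_{\pd_x}\left[\pd_x^{-m}\left(\frac{1}{x-y}\pd_x^{N-1}\right)^n\pd_x^{-j-1}\right]$ is homogeneous of a definite degree in $(x-y)$, so swapping $x\leftrightarrow y$ multiplies it by a fixed power of $-1$, and combined with the $(-1)^m$ from the resolvent and the summation constraint $1\le m\le n(N-1)-j$ the net parity collapses to $(-1)^j$, independent of $m$ and $n$. This is the computation that produces \eqref{4.22}.

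An alternative and perhaps cleaner route avoids \thref{th4.4} entirely and works directly with the operator identity \eqref{4.11}: apply the adjoint antiautomorphism to $H(x,y,\pd_x)=\sum_n\om_n(x,y)\pd_x^{-n}$, use that $\pd_x^*=-\pd_x$ and that the coefficients depend on both $x$ and $y$, and match the result against the definition \eqref{4.9}, tracking how $\pd_x^j D_N^k$ transforms under the adjoint and under $x\leftrightarrow y$. Since \eqref{4.10} gives $D_N=-(x-y)\pd_x^{-N+1}+(N-1)\pd_x^{-N}$, one sees $D_N^*$ relates to the corresponding object with $x-y$ replaced by $y-x$, and the factor $\pd_x^j$ contributes the sign $(-1)^j$ on passing to the adjoint; combined with \eqref{4.20} this yields \eqref{4.22} without any residue bookkeeping.

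The main obstacle I anticipate is purely a matter of sign- and variable-bookkeeping: keeping straight the three separate places where a sign can enter — the $z\mapsto -z$ in \eqref{4.20}, the $\pd_x\mapsto -\pd_x$ under the adjoint, and the $x\leftrightarrow y$ swap in the $(x-y)$-homogeneous residue — and verifying that all of them except the $(-1)^j$ cancel. There is no analytic difficulty; once \eqref{4.20} and \thref{th4.2}/\thref{th4.4} are in hand the result is forced, and the only real work is choosing the presentation (operator-adjoint version versus residue-formula version) that makes the cancellation most transparent.
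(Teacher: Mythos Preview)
Your proposal is correct and follows exactly the route the paper indicates: the paper does not give a formal proof but simply records the sentence ``Combining the last formula [i.e.\ \eqref{4.20}] with the explicit connection between the resolvent and the heat kernel, we see that\ldots'', and both of your variants (via the residue formula \eqref{4.16}/\eqref{4.17}, or directly via the operator identity \eqref{4.11}) are precisely the intended fleshing-out of that remark. Your caveat about the distinction between $\cL^*$ and $(-1)^N\cL^*$ when $N$ is odd is well placed---the missing factor $(-1)^{kN}$ that shows up in the raw computation is exactly compensated by the $(-1)^k$ of \reref{re3.4}, since for odd $N$ the equation $\pd_t v=\ka_N\cL^* v$ is the heat equation for the monic operator $-\cL^*$ with sign $-\ka_N$.
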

In particular, if $N$ is even and $\cL$ is formally self-adjoint, \eqref{4.22}  extends the well-known symmetry property of Hadamard's coefficients about the diagonal for the Schr\"{o}dinger operator discussed in Example~\ref{Ex3.2}. If $N$ is odd and $\cL$ is skew-symmetric, we can use Remark~\ref{re3.4} to deduce that the coefficients $H_n^{j}(x,y)$ are symmetric or skew-symmetric depending on the parity of $n+j$. We summarize these properties below.
\begin{Corollary}\label{Co4.7}
\begin{enumerate}[(i)]
\item If $N$ is even and the operator $\cL$ is formally self-adjoint, i.e. $\cL^*=\cL$, then 
\begin{equation}\label{4.23}
H_n^{j}(x,y)=(-1)^{j}H_n^{j}(y,x).
\end{equation}
\item If $N$ is odd and the operator $\cL$ is formally skew-symmetric, i.e. $\cL^*=-\cL$, then 
\begin{equation}\label{4.24}
H_n^{j}(x,y)=(-1)^{n+j}H_n^{j}(y,x).
\end{equation}
\end{enumerate}
\end{Corollary}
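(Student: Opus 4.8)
The plan is to deduce both identities from Corollary~\ref{Co4.6}, after making the dual heat equation \eqref{4.21} explicit in each of the two cases; Remark~\ref{re3.4} will then supply the extra sign in the odd case. A preliminary observation makes the earlier results applicable: if $\cL$ has the normalized form \eqref{3.1}, then a direct computation of the adjoint shows that $(-1)^{N}\cL^{*}$ is again monic of order $N$ with vanishing coefficient of $\pd_x^{N-1}$, i.e.\ again of the form \eqref{3.1} — taking the adjoint of $u_{N-2}\pd_x^{N-2}$ produces only terms of order $\le N-2$, so no $\pd_x^{N-1}$-term is created. Hence the operator sitting on the right-hand side of \eqref{4.21}, after the obvious rescaling by a sign, is of the class treated in Sections~\ref{se3}--\ref{se4}, and Corollary~\ref{Co4.6} genuinely applies.

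For part (i), when $N$ is even the sign $\kappa_N$ is fixed, and the hypothesis $\cL^{*}=\cL$ means that \eqref{4.21} is literally the original heat equation \eqref{3.2}. Therefore $\tilde H_n^{j}(x,y)=H_n^{j}(x,y)$ for all $n,j$, and substituting this into \eqref{4.22} gives at once $H_n^{j}(x,y)=(-1)^{j}H_n^{j}(y,x)$.

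For part (ii), when $N$ is odd and $\cL^{*}=-\cL$, equation \eqref{4.21} reads
\[
\pd_t v=\kappa_N\cL^{*}v=-\kappa_N\cL v,
\]
which is the heat equation \eqref{3.2} for the same operator $\cL$ but with $\kappa_N$ replaced by $-\kappa_N$. Consequently $\tilde H_n^{j}(x,y)=H_n^{j}(x,y;-\kappa_N)$, and by Remark~\ref{re3.4} (equivalently, by the symmetry \eqref{2.7} of $A_N$) we have $H_n^{j}(x,y;-\kappa_N)=(-1)^{n}H_n^{j}(x,y;\kappa_N)=(-1)^{n}H_n^{j}(x,y)$. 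Feeding this into \eqref{4.22} yields
\[
H_n^{j}(x,y)=(-1)^{j}\tilde H_n^{j}(y,x)=(-1)^{n+j}H_n^{j}(y,x),
\]
as claimed.

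The only point that needs care is the sign bookkeeping together with the check that the adjoint, twisted by $(-1)^{N}$, stays in the normalized class \eqref{3.1}, so that Corollary~\ref{Co4.6} and Remark~\ref{re3.4} apply without modification; there is no analytic difficulty. As an alternative, self-contained route one can bypass Corollary~\ref{Co4.6} and argue directly from \eqref{4.20} and the definition \eqref{4.6}: the hypotheses on $\cL$ force $\fR(\cL;x,y,z)=(-1)^{?}\fR(\cL;y,x,-z)$ with the appropriate exponent, hence $\om_n(x,y)=(-1)^{n}\om_n(y,x)$ (up to that overall sign), and then the explicit formula \eqref{4.17} — or \thref{th4.4} — for $H_k^{j}$ in terms of the $\om_n$ propagates this symmetry, once one tracks the parity contributed by the factor $(x-y)^{-(kN-j-1)}$ and by the powers of $(x-y)$ appearing in the numerator; for $N$ odd one uses $kN\equiv k\pmod 2$.
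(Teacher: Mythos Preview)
Your proposal is correct and follows essentially the same approach as the paper: both parts are deduced from Corollary~\ref{Co4.6}, with Remark~\ref{re3.4} supplying the extra factor $(-1)^{n}$ in the odd case. Your added check that $(-1)^{N}\cL^{*}$ remains in the normalized form \eqref{3.1} and the sketched alternative via \eqref{4.20} and \eqref{4.17} are reasonable supplementary remarks, though the paper leaves these implicit.
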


\section{Hadamard's coefficients and the Gelfand-Dickey hierarchy} \label{se5}

Recall that if $\cL=\pd_x^N+u_{N-2}\pd_x^{N-2}+\cdots+u_0$ is an $N$-th order differential operator, the $N$-th {\em Gelfand-Dickey hierarchy} is the set of equations
\begin{equation}\label{5.1}
\frac{\pd \cL}{\pd s_m}=\left[(\cL^{m/N})_+,\cL \right].
\end{equation} 
For every fixed $m\in\Nset$, equation \eqref{5.1} can be rewritten as a system of partial differential equations for the coefficients $u_0,\dots,u_{N-2}$. The vector fields $\pd/\pd s_m$ commute with each other, and therefore each of equations \eqref{5.1} defines symmetries for all other equations. Moreover, the functionals
\begin{equation}\label{5.2}
\int \res_{\pd_x}\cL^{k/N}\,dx
\end{equation}
are first integrals of all equations of the Gelfand-Dickey hierarchy, see for instance \cite[Proposition~1.5.3]{Dickey}. When $N=2$ and $N=3$, the  Gelfand-Dickey hierarchy \eqref{5.1} reduces to the KdV  and the Boussinesq hierarchies, respectively.

The proof of \thref{th4.2} shows that the values of Hadamard's coefficients on the diagonal $y=x$ differ by constant factors from the coefficients $\om_n(x,x)$. More precisely, from \eqref{4.11} and \eqref{4.13} we see that the following relation holds
\begin{equation}\label{5.3}
H_k^{j}(x,x)=\frac{\ka_N^k}{(1-(j+1)/N)_k} \om_{kN-j}(x,x), \quad \text{ for }j\in\{0,1,\dots,N-2\}, \quad k\in\Nset.
\end{equation}

Using the last formula, it is not difficult to see that the values of Hadamard's coefficients on the diagonal also provide first integrals of the hierarchy \eqref{5.1}.
\begin{Theorem}\label{th5.1}
We have
\begin{equation}\label{5.4}
H_k^{j}(x,x)=\frac{\ka_N^k}{(1-(j+1)/N)_k}  \res_{\pd_x}\cL^{k-\frac{j+1}{N}}, \quad \text{ for }j\in\{0,1,\dots,N-2\}, \quad k\in\Nset.
\end{equation}
Therefore, the functionals 
\begin{equation}\label{5.5}
J_{k,j}=\int H_k^{j}(x,x) dx, \quad \text{ for }j\in\{0,1,\dots,N-2\}, \quad k\in\Nset
\end{equation}
are first integrals of all equations of the Gelfand-Dickey hierarchy \eqref{5.1}.
\end{Theorem}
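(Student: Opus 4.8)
The plan is to deduce formula \eqref{5.4} directly from the diagonal relation \eqref{5.3}, so that everything reduces to establishing the identity
\[
\om_{kN-j}(x,x)=\res_{\pd_x}\cL^{k-(j+1)/N},\qquad j\in\{0,\dots,N-2\},\ k\in\Nset .
\]
This is exactly the residue computation already carried out at the end of the proof of \thref{th4.2}, which handled the special exponents $n\equiv 1\bmod N$; here it will be run with a general exponent.

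First I would record that, since $\cL=S\pd_x^NS^{-1}$ with $S=1+\psi_1\pd_x^{-1}+\cdots$, the operator $S\pd_x S^{-1}=\pd_x+O(\pd_x^{-1})$ is monic of order one and its $N$-th power is $\cL$; by the uniqueness of roots in the ring of formal pseudo-differential operators recalled in \seref{se4}, it is $\cL^{1/N}$, and hence $\cL^{m/N}=S\pd_x^mS^{-1}$ for every $m\in\Zset$. Taking $m=kN-j-1$ and using $\pd_x^me^{xz}=z^me^{xz}$, I get
\[
z^{kN-j-1}\Psi(x,z)=z^{kN-j-1}Se^{xz}=\bigl(S\pd_x^mS^{-1}\bigr)\bigl(Se^{xz}\bigr)=\bigl(\cL^{k-(j+1)/N}S\bigr)e^{xz}.
\]
Setting $y=x$ in \eqref{4.6} gives $\Psi(x,z)\Psi^*(x,z)=\sum_{n\ge0}\om_n(x,x)z^{-n}$, so $\om_{kN-j}(x,x)=\res_z\bigl[z^{kN-j-1}\Psi(x,z)\Psi^*(x,z)\bigr]$. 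Applying the residue pairing \eqref{4.15} with $P=\cL^{k-(j+1)/N}S$ and $Q=S^{-1}$, so that $Q^*e^{-xz}=(S^*)^{-1}e^{-xz}=\Psi^*(x,z)$, I would then obtain
\[
\om_{kN-j}(x,x)=\res_{\pd_x}\bigl(\cL^{k-(j+1)/N}SS^{-1}\bigr)=\res_{\pd_x}\cL^{k-(j+1)/N},
\]
and combining this with \eqref{5.3}, together with $\ka_N^{-k}=\ka_N^k$ since $\ka_N=\pm1$, yields \eqref{5.4}.

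For the first-integral statement, \eqref{5.4} shows that
\[
J_{k,j}=\frac{\ka_N^k}{(1-(j+1)/N)_k}\int\res_{\pd_x}\cL^{(kN-j-1)/N}\,dx,
\]
i.e. $J_{k,j}$ is a nonzero constant multiple of the functional \eqref{5.2} with $k$ replaced by $kN-j-1$, which is a first integral of all flows \eqref{5.1} by \cite[Proposition~1.5.3]{Dickey}; hence so is $J_{k,j}$.

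I do not expect a serious obstacle here: the argument is a direct extension of the proof of \thref{th4.2}, and the only points requiring care are (i) the well-definedness of $\cL^{m/N}$ and the identity $\cL^{m/N}=S\pd_x^mS^{-1}$, which rest on $\cL$ being monic of order $N$ and on the uniqueness of roots; (ii) keeping track of which variable the pseudo-differential operators act on when passing to the diagonal, exactly in the spirit of \reref{re4.3}; and (iii) the elementary remark that as $(k,j)$ ranges over $\Nset\times\{0,\dots,N-2\}$ the exponent $kN-j-1$ runs over the positive integers not divisible by $N$, so the family \eqref{5.5} recovers precisely the nontrivial members of the classical family \eqref{5.2}.
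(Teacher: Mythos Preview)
Your proof is correct and follows essentially the same route as the paper's. The only cosmetic difference is that the paper expands $\cL^{m/N}=S\pd_x^m((S^*)^{-1})^*$ directly to read off $\res_{\pd_x}\cL^{m/N}=\sum_{k+j=m+1}\psi_k(x)\psi_j^*(x)=\om_{m+1}(x,x)$, whereas you package the same identity via the residue pairing \eqref{4.15}, exactly as in the closing lines of the proof of \thref{th4.2}; both then combine with \eqref{5.3} and the reference \cite[Proposition~1.5.3]{Dickey}.
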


\begin{proof}
From \eqref{4.2} and \eqref{4.4} it follows that for $m\in\Nset$ we have
\begin{align}
\cL^{m/N}=S\pd_x^m ((S^{*})^{-1})^*&= \left(\sum_{k=0}^{\infty}\psi_k(x) \pd_x^{-k}\right)\pd_x^{m}  \left(\sum_{j=0}^{\infty}\psi_j^{*}(x) (-\pd_x)^{-j}\right)^{*}\nonumber\\
&= \sum_{j,k=0}^{\infty} \psi_k(x) \pd_x^{m-(k+j)}\, \psi_j^{*}(x),\label{5.6}
\end{align}
and therefore 
\begin{equation}\label{5.7}
\res_{\pd_x}\cL^{m/N} = \sum_{k+j=m+1}\psi_k(x)  \psi_j^{*}(x)= \om_{m+1}(x,x).
\end{equation}
Plugging $m=kN-(j+1)$ in the last formula and using \eqref{5.3} we obtain \eqref{5.4}.
\end{proof}

\begin{Remark}[KdV hierarchy]\label{re5.2}
When $N=2$ and $\cL=\pd_x^2+u_0$, a short computation shows that 
\begin{equation*}
\left[(\cL^{(2m-1)/2})_+,\cL \right]=2\,\frac{\pd\, \om_{2m}(x,x)}{\pd x},
\end{equation*}
see \cite[page 2125]{I05}. Combining this with \eqref{5.3} we see that the KdV hierarchy \eqref{5.1} can be rewritten in terms of Hadamard's coefficients as follows
\begin{equation*}
\frac{\pd u_0}{\pd s_{2m-1}}=\frac{(2m-1)!!}{2^{m-1}} \,\frac{\pd H_{m}^{0}(x,x)}{\pd x}, \qquad \text{ where }m\in\Nset.
\end{equation*}
This representation has been used by several authors to find explicit formulas for the higher KdV flows using Hadamard's coefficients, see for instance \cite{ASch} and the references therein. When $N>2$, the right-hand sides of the Gelfand-Dickey equations \eqref{5.1} can no longer be expressed just as scalar multiples of the derivatives of the resolvent coefficients on the diagonal, and the connection becomes more subtle. However, one can use the explicit relation in \thref{th4.2} to generate the Gelfand-Dickey flows by taking more terms in the expansion of Hadamard's coefficients about the diagonal $y=x$. We illustrate this in the case when $N=3$ which corresponds to the Boussinesq hierarchy.
\end{Remark}

\begin{Proposition}[Boussinesq hierarchy]\label{pr5.3}
Let $N=3$, $\ka_3=1$, 
$$\cL=\pd_x^3+u_1(x)\pd_x+u_0(x),$$ 
and let $\{H_k^{j}(x,y)\}_{k\in\Nset, \; j\in\{0,1\}}$ denote the Hadamard's coefficients in the expansion of the heat kernel \eqref{3.7}. The Boussinesq hierarchy \eqref{5.1} is equivalent to the equations
\begin{subequations}\label{5.8}
\begin{align}
\frac{\pd u_1}{\pd s_{3k-2}}=&3 (1/3)_{k}\, \frac{\pd }{\pd x} H_k^{1}(x,x)\nonumber\\
\frac{\pd u_0}{\pd s_{3k-2}}=&3 (1/3)_{k}\, \frac{\pd }{\pd x} \left[H_k^{0}(x,x)+ \frac{\pd H_k^{1}(x,y)}{\pd x}\Bigg|_{y=x} \right]\label{5.8a}
\intertext{and} 
\frac{\pd u_1}{\pd s_{3k-1}}=&3 (2/3)_{k}\, \frac{\pd }{\pd x} H_k^{0}(x,x)\nonumber\\
\frac{\pd u_0}{\pd s_{3k-1}}=&3 (2/3)_{k}\, \frac{\pd }{\pd x} \left[ \frac{\pd H_k^{0}(x,y)}{\pd x}\Bigg|_{y=x} \right],\label{5.8b}
\end{align}
\end{subequations}
where $k\in\Nset$.
\end{Proposition}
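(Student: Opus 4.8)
The plan is to work entirely on the diagonal $y=x$, where by Theorem~\ref{th5.1} the Hadamard's coefficients $H_k^{j}(x,x)$ are scalar multiples of $\res_{\pd_x}\cL^{k-(j+1)/3}$, i.e. of $\om_{3k-j}(x,x)$, and then to supply the missing first-order jet $\pd_x H_k^{0}(x,y)|_{y=x}$ from the explicit formula \eqref{4.17} relating the $H_k^{j}$ to the resolvent coefficients $\om_n(x,y)$ near the diagonal. The point is that for $N=3$ the Boussinesq flows $\pd_{s_m}\cL=[(\cL^{m/3})_+,\cL]$ split into two families according to $m\equiv 1$ or $m\equiv 2\pmod 3$, and for each family the right-hand side is a pair of $x$-derivatives of differential polynomials in $u_0,u_1$; I would show these polynomials coincide with the bracketed expressions in \eqref{5.8a}--\eqref{5.8b}.

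First I would compute $[(\cL^{m/3})_+,\cL]$ directly as a differential operator and read off its two nonzero coefficients (of $\pd_x$ and of $\pd_x^0$), obtaining for $m=3k-2$ and $m=3k-1$ explicit expressions in terms of the coefficients of the pseudo-differential operator $\cL^{m/3}$, hence in terms of the $\psi_k,\psi_k^{*}$ via the dressing \eqref{4.2}--\eqref{4.4}; equivalently, in terms of $\om_n(x,x)=\res_{\pd_x}\cL^{(n-1)/3}$ and, crucially, of $\pd_x\om_n(x,x)$ and of $\pd_x\om_n(x,y)|_{y=x}$. The standard identity $\frac{\pd}{\pd s_m}\cL=[(\cL^{m/3})_-,\cL]$ together with the fact that $[(\cL^{m/3})_-,\cL]$ is a differential operator of order $\le 1$ forces both components to be total $x$-derivatives; I would track exactly which resolvent data appear. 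Then, on the other side, I would use \eqref{5.3} to express $H_k^{1}(x,x)$ and $H_k^{0}(x,x)$ through $\om_{3k-1}(x,x)$ and $\om_{3k}(x,x)$ with the Pochhammer prefactors $(1/3)_k$, $(2/3)_k$, and I would differentiate \eqref{4.17} (with $N=3$, $j=0$) once in $x$ and restrict to $y=x$, carefully extracting the finite combination of $\om_n(x,x)$ and $\pd_x\om_n(x,y)|_{y=x}$ that survives the limit — this is where the $b_{k,j,n}$ get pinned down and where the coefficient $3$ and the Pochhammer factors must reappear.

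Comparing the two computations reduces the Proposition to a finite set of algebraic identities among Pochhammer symbols and the combinatorial constants coming from pulling $\pd_x$ to the right in $(\tfrac{1}{x-y}\pd_x^{2})^k$; these I would verify either by a generating-function argument as in Example~4.5 or by a direct induction on $k$. I expect the main obstacle to be bookkeeping in the second step: the term $\pd_x H_k^{1}(x,y)|_{y=x}$ in \eqref{5.8a} and $\pd_x H_k^{0}(x,y)|_{y=x}$ in \eqref{5.8b} involve the \emph{non-diagonal} first-order behaviour of the resolvent, so one cannot simply use $\res_{\pd_x}\cL^{k/3}$; one must differentiate the operator identity \eqref{4.11} of Theorem~\ref{th4.2} before setting $y=x$, keeping $x$ and $y$ independent as stressed in Remark~\ref{re4.3}, and only then match against the $\pd_x$- and $\pd_x^0$-coefficients of $[(\cL^{m/3})_+,\cL]$. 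Once that matching is set up correctly, the equivalence is a routine, if lengthy, verification, and it is genuinely an equivalence because the prefactors $(1/3)_k,(2/3)_k$ are nonzero, so \eqref{5.8} can be inverted to recover \eqref{5.1}.
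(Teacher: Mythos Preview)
Your overall strategy coincides with the paper's: compute $[(\cL^{m/3})_+,\cL]=[\cL,(\cL^{m/3})_-]_+$ explicitly, read off the $\pd_x$- and $\pd_x^0$-coefficients in terms of $\om_{m+1}(x,x)$, $\om_{m+2}(x,x)$ and $\pd_x\om_{m+1}(x,y)|_{y=x}$, and then convert these into Hadamard data via \eqref{5.3} and a differentiated version of Theorem~\ref{th4.2}. Where you diverge is in the off-diagonal step: you propose going through \eqref{4.17}, pinning down the constants $b_{k,j,n}$, and then checking ``a finite set of algebraic identities among Pochhammer symbols'' by induction or generating functions. None of that is needed. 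The paper simply differentiates the operator identity \eqref{4.11} once in $x$ \emph{before} setting $y=x$ (exactly as you note in your final paragraph) and compares the coefficient of $\pd_x^{-3k}$ on both sides; since on the diagonal $\pd_x^{j}D_N^{k}$ collapses to a single power of $\pd_x$ (this is the computation \eqref{4.13}), the result $\pd_x\om_{3k}(x,y)|_{y=x}=(2/3)_k\,\pd_x H_k^{0}(x,y)|_{y=x}$ (and the analogous identity at $\pd_x^{-3k+1}$) drops out immediately, with no combinatorics and no $b_{k,j,n}$. So your plan is correct, and your last paragraph already names the right tool; just discard the detour through \eqref{4.17}.
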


\begin{proof}
Using \eqref{5.6} with $N=3$ we see that 
$$(\cL^{m/3})_{-}=r_{1}(x)\pd_x^{-1}+r_2(x)\pd_x^{-2}+\cdots,$$
where 
$$r_1(x)=\om_{m+1}(x,x)\qquad\text{ and }\qquad r_2(x)=\om_{m+2}(x,x)-\frac{\pd \om_{m+1}(x,y)}{\pd y}\Bigg|_{y=x}.$$
On the other hand,
\begin{align}
\left[(\cL^{m/3})_+,\cL \right]&= \left[\cL ,(\cL^{m/3})_-\right]\nonumber\\
&=\left[\pd_x^3+u_1(x)\pd_x+u_0(x),  r_{1}(x)\pd_x^{-1}+r_2(x)\pd_x^{-2}\right]_{+}\nonumber\\
&=3r_1'(x)\pd_x+3(r_1'(x)+r_2(x))'\nonumber\\
&=3 (\om_{m+1}(x,x))'\pd_x+ 3\left(\om_{m+2}(x,x) +\frac{\pd \om_{m+1}(x,y)}{\pd x}\Big|_{y=x}\right)'.\label{5.9}
\end{align}
From \eqref{5.3} we know that 
$$\om_{3k-1}(x,x)=(1/3)_kH_k^1(x,x)\qquad\text{ and }\qquad\om_{3k}(x,x)=(2/3)_kH_k^0(x,x),$$
which gives the formulas for the flows on $u_1$ in \eqref{5.8} when we take $m=3k-2$ and $m=3k-1$, respectively. To obtain the second equations, we differentiate \eqref{4.11} with respect to $x$ and then we set $y=x$. Comparing the coefficients of $\pd_x^{-3k}$ in the resulting equation we see that 
$$\frac{\pd \om_{3k}(x,y)}{\pd x}\Big|_{y=x}= (2/3)_{k}\, \frac{\pd H_k^{0}(x,y)}{\pd x}\Big|_{y=x},$$
which combined with formula \eqref{5.9} for $m=3k-1$ and the fact that $\om_{3k+1}(x,x)=0$ yields the flow on $u_0$ in \eqref{5.8b}. 
The proof of the second equation in formula \eqref{5.8a} follows along the same lines.
\end{proof}

\begin{Remark}
The explicit formulas for Hadamard's coefficients about the diagonal in Example~\ref{Ex3.3} can be used to obtain the first two flows in the Boussinesq hierarchy by taking $k=1$ in \eqref{5.8}. The first flow corresponds to the simple translation $x\to x+s_1$, but the second one leads to the Boussinesq equation which is often used as a name for the Gelfand-Dickey hierarchy when $N=3$.
\end{Remark}

\section{Finite heat kernel expansions} \label{se6}

Recall that for every differential operator $\cL$, the set $\cA$ consisting of all differential operators commuting with $\cL$ is a commutative ring \cite{Schur}. An important invariant of $\cA$ is its rank, which is the greatest common divisor of the orders of the operators in $\cA$. In particular, if $\cA$ is of rank $1$, then $\mathrm{Spec}(\cA)$ is an irreducible algebraic curve that can be completed by adding one smooth point at infinity  \cite{BC,Krichever,Mumford}. 

In this section, we prove that the heat kernel consists of finitely many terms, i.e. only finitely many of Hadamard's coefficients are nonzero, if and only if the operator $\cL=\pd_x^N+u_{N-2}(x)\pd_x^{N-2}+\cdots+u_0(x)$  belongs to a rank-one commutative ring of differential operators whose spectral curve $\mathrm{Spec}(\cA)$ is rational with only one cusp-like singular point, and the coefficients $u_j(x)$ vanish at $\infty$. These operators can be conveniently parametrized by a sub-Grassmannian $\Gr_0$ of the Sato-Segal-Wilson Grassmannian $\Gr$ \cite{DJKM,SS,SW}. We start by reviewing briefly the construction of $\Gr$ together with the Burchnall-Chaundy-Krichever correspondence \cite{BC,Krichever,Mumford} between commutative rings of differential operators and algebraic curves.

Let $S^1=\{z\in\Cset:|z|=1\}$ denote the unit circle and let $\sH=L^2(S^1,\Cset)$ be the Hilbert space of $L^2$ functions on $S^1$. We can split $\sH$ as the orthogonal 
direct sum $\sH=\sH_+\oplus \sH_-$, where $\sH_+$ and  $\sH_-$ consist of the functions whose Fourier series involve only nonnegative and only negative
powers of $z$, respectively. The Sato-Segal-Wilson Grassmannian $\Gr$ consists of all closed subspaces $W$ of $\sH$ such that
\begin{enumerate}[(i)]
\item the projection $W\rightarrow \sH_+$ is a Fredholm operator of index zero;
\item the projection $W\rightarrow \sH_-$ is a compact operator.
\end{enumerate}
For each $W\in\Gr$ there is a unique Baker function $\Psi_W(x,z)$ 
characterized by the following properties:
\begin{enumerate}[(i)]
\item $\Psi_W$ has the form 
$\Psi_W(x,z)=\left(1+\sum_{i=1}^{\infty}\psi_i(x)z^{-i}\right)e^{xz}$;
\item $\Psi_W(x,.)$ belongs to $W$ for each $x$.
\end{enumerate}
We  denote by $W^{\alg}$ the dense subspace of  elements of finite order, i.e. 
the elements of the form $\sum_{j\leq k}a_jz^j$. Let $A_W$  be the ring of 
analytic functions $f(z)$ on $S^1$ that leave $W^{\alg}$ invariant, i.e.
\begin{equation}\label{6.1}
A_W=\{ f(z)\text{ is analytic on }S^1:f(z)W^{\alg}\subset W^{\alg}\}.
\end{equation}
For each $f(z)\in A_W$ there exists a unique differential operator $\cK_f=\cK_f(x,\pd_x)$ in the variable $x$, such that
\begin{equation}\label{6.2}
\cK_f(x,\pd_x)\Psi_W(x,z)=f(z)\Psi_W(x,z).
\end{equation} 
The order of the operator $\cK_f$ is equal to the order of $f$. In general, 
$A_W$ is trivial, i.e. $A_W=\Cset$. The spaces $W$ which give rise to rank-one commutative rings of differential operators are those for which $A_W$ contains an element of each sufficiently large order, and in this case the ring of operators is obtained by taking
\begin{equation}\label{6.3}
\cA_W=\{\cK_f:f\in A_W\}.
\end{equation}
Clearly, $\cA_W$ is isomorphic to $A_W$ and has rank 1. Conversely, up to a change of variable and gauge transformation, every maximal rank-one commutative ring of differential operators can be obtained by this construction for appropriate $W\in\Gr$.

We denote by $\Gr^{(N)}$ the subspace of $\Gr$ consisting of those $W$ which are invariant under the multiplication by $z^N$, i.e. $z^N\in A_W$. Note that if $W\in\Gr^{(N)}$ then with the notation in \eqref{6.2} it follows that $\cK_{z^N}$ is a monic differential operator of order $N$ which belongs to the commutative ring $\cA_W$, and we can identify the Baker function $\Psi_W(x,z)$ with the formal wave function of the operator $\cL=\cK_{z^N}$ introduced in \seref{se4}.

Let $\Gr_0$ be the subspace of $\Gr$ consisting of those $W$ such that
\begin{equation}\label{6.4}
z^{k}\sH_+\subset W\subset z^{-k}\sH_+, \qquad \text{ for some }k\in\Nset_0,
\end{equation}
and we set 
$$\Gr_0^{(N)}=\Gr_0\cap \Gr^{(N)}.$$
The sub-Grassmannian $\Gr_0$ parametrizes the rank-one commutative rings of differential operators whose spectral curves are rational with just one cusp-like singularity \cite[Proposition 7.3]{SW}.

With these notations, we can formulate the main result in this section, which characterizes the operators $\cL$ for which the heat-kernel consists of finitely many terms.

\begin{Theorem}\label{th6.1} 
For an $N$-th order operator $\cL=\pd_x^N+u_{N-2}(x)\pd_x^{N-2}+\cdots+u_0(x)$ the following conditions are equivalent.
\begin{enumerate}[\rm(i)]
\item Finitely many of Hadamard's coefficients $H_k^{j}(x,y)$ are not identically zero.
\item The operator $\cL$  belongs to a rank-one commutative ring $\cA$ of differential operators whose spectral curve $\mathrm{Spec}(\cA)$ is rational with only one cusp-like singular point, and the coefficients $u_j(x)$ vanish at $x=\infty$.
\item There exists $W\in\Gr_0^{(N)}$ such that $\cL=\cK_{z^N}$.
\end{enumerate}
\end{Theorem}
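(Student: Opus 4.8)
The strategy is to prove the cycle of implications (iii) $\Rightarrow$ (i) $\Rightarrow$ (ii) $\Rightarrow$ (iii), using \thref{th4.4} to translate the finiteness of the Hadamard's coefficients into a statement about the coefficients $\om_n(x,y)$ of the resolvent, and then using the Burchnall-Chaundy-Krichever dictionary to translate that into algebraic geometry. The key observation is formula \eqref{4.17}: since $H_k^{j}(x,y)$ is, up to the factor $(x-y)^{-(kN-j-1)}$, a polynomial in $(x-y)$ of degree at most $k(N-1)-j-1$ with coefficients built linearly from $\om_1(x,y),\dots,\om_{k(N-1)-j}(x,y)$, the vanishing of all but finitely many $H_k^j$ should be essentially equivalent to the vanishing of all but finitely many $\om_n(x,y)$, i.e. to $\fR(\cL)$ in \eqref{4.6} being a \emph{polynomial} in $1/z$ times $e^{(x-y)z}$; equivalently $\Psi(x,z)=p(x,z)e^{xz}$ and $\Psi^*(y,z)=q(y,z)e^{-yz}$ with $p,q$ rational in $z$ (actually Laurent polynomials in $1/z$). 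This rationality of the wave function is exactly the condition that the point $W$ of Sato's Grassmannian attached to $\cL$ lies in $\Gr_0$, by the Segal-Wilson description of $\Gr_0$ cited above.

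For (iii) $\Rightarrow$ (i): if $W\in\Gr_0^{(N)}$ then \eqref{6.4} forces $\Psi_W(x,z)=\left(1+\sum_{i=1}^{k}\psi_i(x)z^{-i}\right)e^{xz}$ to be a \emph{finite} sum, and similarly the adjoint wave function (which corresponds to the point $W^\perp$, also in $\Gr_0$) is a finite sum; hence the product $\fR(\cL)=\left(\sum_{n=0}^{M}\om_n(x,y)z^{-n}\right)e^{(x-y)z}$ has only finitely many nonzero $\om_n$. By \eqref{4.16} (or \eqref{4.17}) only finitely many $H_k^{j}(x,y)$ can then be nonzero, giving (i). The main subtlety here is to be careful that $W\in\Gr_0$ really gives a \emph{polynomial} wave function, not merely one whose coefficients are rational functions — this is where I would invoke \cite[Proposition 7.3]{SW} and \eqref{6.4} directly rather than reprove it.

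For (i) $\Rightarrow$ (ii): assume only finitely many $H_k^j$ are nonzero. I would first extract from \eqref{4.17}, by an induction on $k$ (peeling off the highest-degree term in $(x-y)$), that only finitely many of the $\om_n(x,y)$ are nonzero; the point is that if $H_k^j\equiv 0$ for all large $k$ then the numerator polynomial in \eqref{4.17} must vanish identically, and running $k,j$ through enough values pins down $\om_n\equiv 0$ for $n$ large. Once $\fR(\cL)=\left(\sum_{n=0}^{M}\om_n(x,y)z^{-n}\right)e^{(x-y)z}$ is a finite sum, I would argue that $\Psi(x,z)$ itself must be of the form (Laurent polynomial in $1/z$) $\times\, e^{xz}$: indeed $\Psi(x,z)\Psi^*(y,z)$ being polynomial in $1/z$, together with $\Psi$ normalized as $(1+O(1/z))e^{xz}$ and $\Psi^*$ as $(1+O(1/z))e^{-yz}$, forces each factor to be polynomial in $1/z$ (one can see this by fixing $y$ and reading off that the $1/z$-expansion of $\Psi(x,z)$ terminates, using that $\Psi^*(y,z)$ has leading coefficient $1$). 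Then the associated $W=\overline{\Span}\{z^i\Psi(x,z):x\}$ satisfies \eqref{6.4}, so $W\in\Gr_0^{(N)}$, and by \cite[Proposition 7.3]{SW} the spectral curve $\mathrm{Spec}(\cA_W)$ is rational with a single cusp-like singularity; the coefficients $u_j$ vanish at $x=\infty$ because the finitely many $\psi_i(x)$ do (they are polynomials in $x$ times exponentials only through the Grassmannian normalization — more precisely, the Segal-Wilson flow shows $\psi_i(x)$ are rational in $x$ vanishing at $\infty$, which one reads off from finiteness of the expansion). This gives (ii), and also simultaneously produces the $W$ needed for (iii), closing the loop with (ii) $\Rightarrow$ (iii) handled by the same identification $W\leftrightarrow\cA$ from the Burchnall-Chaundy-Krichever correspondence together with \cite[Proposition 7.3]{SW}.

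The main obstacle I anticipate is the bookkeeping in (i) $\Rightarrow$ the finiteness of $\{\om_n\}$: one needs to check that the linear system relating the family $\{H_k^j\}$ to the family $\{\om_n\}$ via \eqref{4.17} is nondegenerate enough that finiteness on one side forces it on the other. Concretely, the coefficients $b_{k,j,n}$ in \eqref{4.17} must be shown not to conspire to cancel; a clean way is to work with the operator identity \eqref{4.11} directly — $H(x,y,\pd_x)=\sum_n \om_n(x,y)\pd_x^{-n}$ — and observe that the left-hand side, as defined by \eqref{4.9}--\eqref{4.10}, is a \emph{finite} sum of terms $H_k^j \pd_x^j D_N^k$ precisely when finitely many $H_k^j$ are nonzero, whereas each $D_N^k=(-\pd_x^{-N+1}(x-y))^k$ contributes a \emph{bounded-below} range of powers $\pd_x^{-m}$ whose coefficients are polynomials in $(x-y)$; matching the two sides power-by-power in $\pd_x^{-1}$ then shows the $\om_n$ are eventually zero without ever inverting \eqref{4.17} explicitly. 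This reduces the obstacle to a finite linear-algebra check, which is routine.
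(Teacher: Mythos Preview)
Your argument has a genuine gap in the implication (iii) $\Rightarrow$ (i). You write that once $\om_n(x,y)=0$ for $n>M$, ``by \eqref{4.16} (or \eqref{4.17}) only finitely many $H_k^j(x,y)$ can then be nonzero.'' But this does not follow from either formula. With $\om_n=0$ for $n>M$, formula \eqref{4.17} becomes
\[
H_k^{j}(x,y)=\frac{\sum_{n=1}^{M}b_{k,j,n}(x-y)^{n-1}\,\om_{n}(x,y)}{(x-y)^{kN-j-1}},
\]
and for every $k$ the numerator is a nontrivial linear combination of the \emph{nonzero} $\om_1,\dots,\om_M$; there is no a priori reason it should vanish. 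The paper flags exactly this obstruction in Remark~\ref{Re6.3}. The missing idea is that for $W\in\Gr_0$ the tau-function is a polynomial, so each $\om_n(x,y)=\tilde{\om}_n(x,y)/(\tilde{\tau}(x)\tilde{\tau}(y))$ with $\tilde{\om}_n\in\Cset[x,y]$ of total degree bounded by some $m$ independent of $k$. The numerator above is then a polynomial of bounded degree $m$, while the denominator contains $(x-y)^{kN-j-1}$; once $kN-j-1>m$, smoothness of $H_k^j$ on the diagonal forces the whole expression to vanish. This degree-versus-smoothness argument is the crux of the paper's proof and cannot be replaced by a bare citation of \eqref{4.16}.

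There is a second, smaller gap in your route from finitely many nonzero $\om_n$ to $W\in\Gr_0$. You claim that if $\Psi(x,z)\Psi^*(y,z)$ is a Laurent polynomial in $1/z$ then so is each factor, ``by fixing $y$ and reading off that the $1/z$-expansion of $\Psi(x,z)$ terminates.'' But a product of two formal series in $1/z$ with constant term $1$ can be a polynomial without either factor being one (e.g.\ $(1-1/z)\sum_{k\ge0}z^{-k}=1$), so the two-variable structure must be exploited more carefully. The paper avoids this by proving Lemma~\ref{le6.2}: from $\om_n(x,y)=0$ for large $n$ and the residue identity \eqref{4.15} it deduces that $S\pd_x^nS^{-1}$ is a differential operator for all large $n$, and then invokes Burchnall--Chaundy [\ref{BC}(c)] to produce an intertwiner $\cV$ with $\ker\cV\subset\Cset[x]$, from which membership in $\Gr_0^{(N)}$ follows.
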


\begin{proof}[Proof of the equivalence {\rm{(ii)}$\Leftrightarrow$\rm{(iii)}} in \thref{th6.1}]
The equivalence of (ii) and (iii) above follows easily from the statements in \cite{SW}. Indeed, if (iii) holds, then we know that $\cL$  belongs to a rank-one commutative ring of differential operators whose spectral curve $\mathrm{Spec}(\cA)$ is rational with only one cusp-like singular point, and we just have to show that  the coefficients $u_j(x)$ vanish at $\infty$. 
This follows easily by using the notion of a tau-function and the fact that $W\in\Gr_0$ if and only if the tau-function $\tau_W(s)$ is a polynomial in a finite number of the time variables $s=(s_1,s_2,\dots)$ for the KP hierarchy,  see \cite[Proposition 8.5]{SW}. 
More precisely, the wave function $\Psi(x,z)=\Psi_W(x,z)$ and the adjoint wave function $\Psi^*(x,z)=\Psi_W^*(x,z)$ can be expressed in terms of a tau-function $\tau(s)=\tau_W(s)$ associated with $W\in\Gr$ by Sato's formulas
\begin{subequations}\label{6.5}
\begin{align}
\Psi(x,z)&=\frac{\tau(s-[z^{-1}])}{\tau(s)}e^{xz} \Big|_{s_1=x,\,s_2=s_3=\cdots=0} \label{6.5a}\\
\intertext{ and }
\Psi^*(x,z)&=\frac{\tau(s+[z^{-1}])}{\tau(s)}e^{-xz} \Big|_{s_1=x,\,s_2=s_3=\cdots=0}\label{6.5b}
\end{align}
\end{subequations}
where $[z]=(z,z^2/2,z^3/3,\dots)$. If $\tau(s_1,s_2,\dots)$ is a polynomial in a finite number of the time variables $s_1,s_2,\dots$, then $\Psi(x,z)$ and $\Psi^*(x,z)$ will have finite expansions in a neighborhood of $z=\infty$:
\begin{subequations}\label{6.6}
\begin{align}
\Psi(x,z)&=\left(1+\sum_{k=1}^{m_1}\frac{\psi_k(x)}{ z^{k}}\right)e^{xz}, &&\text{ and }\lim_{x\to\infty}\psi_k(x)=0 \text{ for }k>0, \label{6.6a}\\
\Psi^*(x,z)&=\left(1+\sum_{k=1}^{m_2}\frac{\psi_k^*(x)}{ z^{k}}\right)e^{-xz},  &&\text{ and }\lim_{x\to\infty}\psi_k^*(x)=0 \text{ for }k>0. \label{6.6b}
\end{align}
\end{subequations}
From formula \eqref{5.6} with $m=N$, we see that
\begin{equation*}
\cL= \sum_{k_1=0}^{m_1} \sum_{k_2=0}^{m_2}  \psi_{k_1}(x) \pd_x^{N-(k_1+k_2)}\, \psi_{k_2}^{*}(x),
\end{equation*}
and using \eqref{6.6} it follows that the coefficients $u_j(x)$ vanish at $\infty$. Conversely, if (ii) holds, then from  \cite[Proposition 7.3]{SW} we conclude that there exists $W\in\Gr_0^{(N)}$ and a polynomial $f(z)=z^N+f_{N-2}z^{N-2}+\cdots+ f_0\in A_{W}$ such that $\cL=\cK_{f}$, and equation \eqref{6.2} holds. Therefore, using the same notations as above, it follows that 
\begin{equation*}
\cL= \sum_{k_1=0}^{m_1} \sum_{k_2=0}^{m_2}  \psi_{k_1}(x) f(\pd_x)\pd_x^{-k_1-k_2}\, \psi_{k_2}^{*}(x),
\end{equation*}
From the last formula and \eqref{6.6} it is easy to see that $\lim_{x\to\infty}u_j(x)=f_j$, and therefore $f_0=f_1=\cdots=f_{N-2}=0$, which means that $f(z)=z^N$ and $\cL=\cK_{z^N}$, completing the proof of (iii).
\end{proof}

Before we prove that (i) is equivalent to (iii), we establish another useful characterization of the sub-Grassmannian $\Gr_0$, which can be deduced from the work of Burchnall and Chaundy [\ref{BC}(c)].

\begin{Lemma}\label{le6.2}
For an $N$-th order operator $\cL=\pd_x^N+u_{N-2}(x)\pd_x^{N-2}+\cdots+u_0(x)$ the following conditions are equivalent.
\begin{enumerate}[\rm(i)]
\item The coefficients $\om_n(x,y)$ in $\fR(\cL)$ vanish for $n$ sufficiently large.
\item There exists a monic differential operator $\cV$ such that $\cL=\cV \pd_x^N\cV^{-1}$ and $\ker(\cV)\subset \Cset[x]$.
\item There exists $W\in\Gr_0^{(N)}$ such that $\cL=\cK_{z^N}$.
\end{enumerate}
\end{Lemma}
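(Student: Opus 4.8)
I would prove the cyclic chain {\rm(iii)}$\Rightarrow${\rm(i)}$\Rightarrow${\rm(ii)}$\Rightarrow${\rm(iii)}, using twice the description of $\Gr_0$ coming from \cite{SW}: a plane $W\in\Gr^{(N)}$ lies in $\Gr_0^{(N)}$ if and only if its tau-function is a polynomial in finitely many of the KP times \cite[Prop.~8.5]{SW}, equivalently (via Sato's formulas \eqref{6.5}) if and only if $\Psi_W(x,z)e^{-xz}$ and $\Psi_W^*(x,z)e^{xz}$ are both polynomials in $z^{-1}$; the identification of these planes with the rank-one rings whose spectral curve is rational with one cusp-like point is \cite[Prop.~7.3]{SW}. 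Given this, {\rm(iii)}$\Rightarrow${\rm(i)} is immediate: if $W\in\Gr_0^{(N)}$ and $\cL=\cK_{z^N}$, then $\Psi=\Psi_W$ and $\Psi^*=\Psi_W^*$ have finite expansions, so by \eqref{4.6} the coefficients $\om_n(x,y)$ of $\fR(\cL)=\Psi(x,z)\Psi^*(y,z)$ vanish once $n$ exceeds the sum of the two polynomial degrees.

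For {\rm(i)}$\Rightarrow${\rm(ii)} the crux is to show that, when $\om_n\equiv 0$ for $n>M$, the wave function $\Psi$ and the adjoint wave function $\Psi^*$ are both finite. Granting this, write $S=\cV\pd_x^{-m}$ with $\cV=S\pd_x^m$ a monic differential operator of order $m=\deg S$; then \eqref{4.2} gives $\cL=\cV\pd_x^N\cV^{-1}$, and finiteness of $\Psi^*$ means $(S^*)^{-1}=\cQ\pd_x^{-m'}$ with $\cQ$ monic of order $m'$, so comparing this with $S=\cV\pd_x^{-m}$ yields $\cQ^*\cV=(-1)^{m'}\pd_x^{m+m'}$; hence $\ker\cV\subset\ker\pd_x^{m+m'}\subset\Cset[x]$, which is {\rm(ii)}. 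To prove finiteness of $\Psi$ (and, by applying the same reasoning to $(-1)^N\cL^*$, of $\Psi^*$) I would first extract a commuting operator: setting $R=\sum_{n=0}^{M}\om_n(x,y)\pd_x^{-n}$, the identity $R=S_x\tS_y$ from the proof of \thref{th4.2}, together with $[\tS_y,\pd_x]=0$, gives $\cL=S_x\pd_x^NS_x^{-1}=R\pd_x^NR^{-1}$, so $\cP:=R\pd_x^M$ is a monic differential operator of order $M$ in $x$ (with $y$ a parameter) satisfying $\cL\cP=\cP\pd_x^N$; applying the same construction to $(-1)^N\cL^*$ and invoking the duality \eqref{4.20} produces a monic $\tilde{\cP}$ with $\pd_x^N\tilde{\cP}^*=\tilde{\cP}^*\cL$. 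Then $\tilde{\cP}^*\cP$ commutes with $\pd_x^N$, hence equals $Q(\pd_x)$ for a genuine polynomial $Q$ of degree $2M$, while $\mathcal{B}:=\cP\tilde{\cP}^*$ commutes with $\cL$; using $\cP e^{xz}=z^M q(x,y,z)e^{xz}$ with $q(x,y,z)=\sum_n\om_n(x,y)z^{-n}$ and the factorization $q(x,y,z)e^{xz}=b(y,z)\Psi(x,z)$ one checks $\mathcal{B}\Psi=Q(z)\Psi$. Thus $\cL$ commutes with the nonconstant operator $\mathcal{B}$ and $\Psi$ is a common eigenfunction with polynomial eigenvalues $z^N,Q(z)$ — precisely the Burchnall--Chaundy situation; here I would invoke \cite{BC} (part (c)), using that the whole $y$-family $\{\cP_y,\tilde{\cP}_y\}$ forces the ring $A_W$ attached to $\cL=\cK_{z^N}$ to contain $z^k\Cset[z]$ for some $k$, so that $W\in\Gr_0^{(N)}$ and in particular $\Psi,\Psi^*$ are finite.

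Finally, {\rm(ii)}$\Rightarrow${\rm(iii)} is the classical Burchnall--Chaundy--Krichever correspondence restricted to $\Gr_0$: if $\cL=\cV\pd_x^N\cV^{-1}$ with $\ker\cV\subset\Cset[x]$, then $\ker\cV\subset\ker\pd_x^D$ for $D$ large, so $\pd_x^D=\cQ\cV$ for a differential operator $\cQ$; this makes the wave function $\Psi$ of $\cL$ finite (it is a normalization of $\cV e^{xz}$) and, via the adjoint version of the same factorization, makes $\Psi^*$ finite, so $\tau_W$ is polynomial and $W\in\Gr_0^{(N)}$ by the results of \cite{SW} quoted above. The main obstacle in the whole argument is the step {\rm(i)}$\Rightarrow$``$\Psi$ finite'': finiteness of the $\om_n$ is a statement about the \emph{product} $\Psi(x,z)\Psi^*(y,z)$, and separating it into finiteness of the two factors — equivalently, polynomiality of the tau-function — is exactly where the soliton and Grassmannian techniques, together with the Burchnall--Chaundy analysis of rational commuting operators, are indispensable; the commuting operator $\mathcal{B}$ constructed above is the bridge that makes that analysis applicable.
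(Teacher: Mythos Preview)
Your implications {\rm(iii)}$\Rightarrow${\rm(i)} and {\rm(ii)}$\Rightarrow${\rm(iii)} match the paper's (the latter is phrased there via the sandwich $z^{M-m}\sH_+\subset W\subset z^{-m}\sH_+$ rather than through the tau-function, but the content is the same). The real issue is your {\rm(i)}$\Rightarrow${\rm(ii)}, and you correctly flag it as the hard step — but the bridge you propose is not complete.

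The gap is the sentence ``using that the whole $y$-family $\{\cP_y,\tilde{\cP}_y\}$ forces the ring $A_W$ \dots\ to contain $z^k\Cset[z]$ for some $k$.'' Your family $\mathcal{B}_{y,y'}=\cP_y\tilde{\cP}_{y'}^*$ consists of operators all of the \emph{same} order $2M$, so a priori it only produces elements of order $2M$ in $A_W$; it is not explained how varying $y,y'$ would give elements of every large order. Moreover, your factorization $q(x,y,z)e^{xz}=b(y,z)\Psi(x,z)$ has $b(y,z)=\sum_k\psi_k^*(y)z^{-k}$, and you do not yet know this is a polynomial in $z^{-1}$ — that is exactly the finiteness of $\Psi^*$ you are trying to establish, so there is a circularity lurking here. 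The invocation of \cite{BC}(c) at this point is therefore a hand-wave rather than an argument.

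The paper's route is both shorter and avoids this difficulty entirely. Instead of producing one commuting operator and trying to bootstrap, it shows directly that \emph{all} fractional powers $\cM_n:=S\pd_x^nS^{-1}=\cL^{n/N}$ are honest differential operators once $n\ge n_0$. The mechanism is the residue identity \eqref{4.15}: from $\om_n(x,y)\equiv0$ for $n>n_0$ one gets
\[
\res_z\bigl[z^n\Psi(x,z)\,\pd_x^{j}\Psi^*(x,z)\bigr]=0\quad\text{for all }j\ge0,\ n\ge n_0,
\]
and rewriting this via \eqref{4.15} gives $\res_{\pd_x}\bigl[(S\pd_x^nS^{-1})\pd_x^j\bigr]=0$ for all $j$, i.e.\ $(\cM_n)_-=0$. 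Now pick $n\ge n_0$ coprime to $N$; then $\cM_n^N=\cL^n$, which is precisely the identity $P^n=Q^m$ of \cite{BC}(c), and Theorem~3 there furnishes the monic differential operators $\cV,\cU$ with $\cL\cV=\cV\pd_x^N$ and $\cU\cV=\pd_x^M$, from which {\rm(ii)} is immediate. No separate finiteness of $\Psi$ or $\Psi^*$, no $y$-family, and no detour through {\rm(iii)} is needed.
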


\begin{proof}[Proof of \leref{le6.2}] 
Suppose first that (i) holds, i.e.
$$\om_n(x,y)=0 \quad \text { for }\quad n > n_0.$$ 
From the definition of $\om_n(x,y)$ in \eqref{4.6}, it follows that
\begin{equation}\label{6.7}
\res_{z}\left[z^n\Psi(x,z)\, \pd_x^{j}\Psi^{*}(x,z) \right]=0,\qquad \text{ for all }j\in\Nset_0\text{ and }n\geq n_0.
\end{equation}
From \eqref{4.4} we see that 
$$z^n\Psi(x,z) =S\pd_x^n e^{xz} \qquad \text{and }\qquad \pd_x^{j}\Psi^{*}(x,z)= \pd_x^{j}(S^{*} )^{-1} e^{-xz},$$
which combined with \eqref{6.7} and \eqref{4.15} shows that
\begin{equation*}
\res_{\pd_x}\left[ (S\pd_x^nS^{-1})(-\pd_x)^{j} \right]=0,\qquad \text{ for all }j\in\Nset_0,\text{ and }n\geq n_0.
\end{equation*}
This means that for every $n\geq n_0$, $\cM_n=S\pd_x^nS^{-1}$ is a differential operator of order $n$, commuting with $\cL$. In particular, if we take $n\geq n_0$ which is relatively prime with $N$, it follows that $\cM_n^N=\cL^n$. Since the operators $\cM_n$ and $\cL$ have relatively prime orders, [\ref{BC}(c), Theorem 3, p.~473] tells us that there exist monic differential operators $\cV$ and $\cU$, and a positive integer $M$ such that  
\begin{equation}\label{6.8}
\cL\cV=\cV \pd_x^N,\quad\text{ and }\quad\cU\cV=\pd_x^M,
\end{equation}
from which (ii) follows. 

If (ii) holds and if $m$ denotes the order of $\cV$, we use $S=\cV\pd_x^{-m}$ as a wave operator and we define the Baker function by
\begin{equation}\label{6.9}
\Psi_W(x,z)=\frac{1}{z^m}\cV(e^{xz}).
\end{equation}
We can recover $W\in\Gr$ by considering the closed span of the functions $\Psi_W(x,z)$ as $x$ varies. In particular, if $\Psi_W(x,z)$ is regular at $x=x_0$, we can take the closed span of the functions $\pd_x^{j}\Psi_W(x,z)|_{x=x_0}$ where $j=0,1,\dots$, and if $x_0=0$, these functions will form a basis for the dense subspace $W^{\mathrm{alg}}$ of $W$. From \eqref{6.9} we see that 
\begin{equation}\label{6.10}
W\subset z^{-m}\sH_+. 
\end{equation}
Since $\ker(\cV)\subset \Cset[x]$, there exists a positive integer $M$ such that $\ker(\cV)\subset \ker(\pd_x^M)$ and therefore we can find an operator $\cU$ such that $\cU\cV=\pd_x^M$, i.e. \eqref{6.8} holds. Using this and applying $\cU$ to \eqref{6.9} we see that $\cU \Psi_W(x,z)=z^{M-m}e^{xz}$, which shows that 
\begin{equation}\label{6.11}
z^{M-m}\sH_+\subset W.
\end{equation}
From equations \eqref{6.10}-\eqref{6.11} it follows that $W\in\Gr_0^{(N)}$, thus completing the proof of (iii). 

Finally, if (iii) holds then $\tau_W(s_1,s_2,\dots)$ is a polynomial in a finite number of the time variables $s_1,s_2,\dots$, and formulas \eqref{6.5} show that $\Psi(x,z)$ and $\Psi^*(x,z)$ will have finite expansions as in \eqref{6.6} in a neighborhood of $z=\infty$, establishing (i) and completing the proof of the lemma.
\end{proof}

\begin{proof}[Proof of the equivalence {\rm{(i)}$\Leftrightarrow$\rm{(iii)}} in \thref{th6.1}]
Suppose first that (i) holds, i.e. there exists $k_0\in\Nset$ such that
\begin{equation}\label{6.12}
H_k^{j}(x,y)=0 \qquad \text{ for }j\in\{0,\dots,N-2\}\text{ and } k\geq k_0.
\end{equation}
From \eqref{4.10} it follows that for $n\in\Nset_0$ we have
\begin{equation}\label{6.13}
D_N^n=\sum_{k=0}^{n}(-1)^{n-k}a_{n,k}(x-y)^{n-k}\pd_x^{-n(N-1)-k},
\end{equation}
where the coefficients $a_{n,k}$ depend only on $n,k$ and $N$, but for simplicity we omit the $N$-dependence since $N$ is fixed.
If we compare the coefficients of $\pd_x^{-n}$ in formula \eqref{4.11}, and using the expansion in \eqref{6.13}, it follows that we can write $\om_n(x,y)$ as a linear combination of $H_k^{j}(x,y)$,  where $k$ is such that 
$$\frac{n}{N}\leq k<\frac{n}{N-1}+1,$$
with coefficients involving powers of $(x-y)$. This shows that $\om_n(x,y)$ will vanish identically for $n$ sufficiently large, i.e.
$$\om_n(x,y)=0 \quad \text { for }\quad n\geq n_0.$$ 
For instance, with the notations above we can take $n_0= Nk_0$. Applying \leref{le6.2} we see that (iii) holds, completing the proof in this direction.

Conversely, suppose now that (iii) holds. We know from \leref{le6.2} that $\om_n(x,y)=0$ for $n$ sufficiently large, say $n\geq n_0$. 
We will use again the fact that if $W\in\Gr_0$ then the tau-function $\tau_W(s)$ is a polynomial in a finite number of the time variables $s=(s_1,s_2,\dots)$ and therefore equations \eqref{6.6} hold. Sato's formulas \eqref{6.5} show that the nonzero coefficients $\om_n(x,y)$ can be written as 
$$\om_n(x,y)=\frac{\tilde{\om}_n(x,y)}{\tilde{\tau}(x)\tilde{\tau}(y)},\qquad \text{ for }n<n_0,$$
where $\tilde{\tau}(x)=\tau(x,0,0,\dots)$ is a polynomial of $x$, and $\tilde{\om}_n(x,y)$ are polynomials of $x$ and $y$ for all $n<n_0$.
Substituting this into \eqref{4.17} yields
\begin{equation}\label{6.14}
H_k^{j}(x,y)=\frac{\sum_{n=1}^{n_0-1}b_{k,j,n}(x-y)^{n-1} \,\tilde{\om}_{n}(x,y)}{\tilde{\tau}(x)\tilde{\tau}(y)\,(x-y)^{kN-j-1}},
\end{equation}
where $b_{k,j,n}$ are numbers depending on $k,j,n$. Note that the numerator is a polynomial of $x$ and $y$ of total degree at most $m$, where 
\begin{equation}\label{6.15}
m=\max_{0<n<n_0}[n-1+\deg \tilde{\om}_{n}(x,y)]
\end{equation} 
and $\deg \tilde{\om}_{n}(x,y)$ denotes the total degree of the polynomial $\tilde{\om}_{n}(x,y)$. Since $m$ is a constant depending only on the operator $\cL$, while the power of $(x-y)$ in the denominator in \eqref{6.14} grows linearly with $k$, the smoothness of Hadamard's coefficients in a neighborhood of the diagonal $y=x$ implies that they must vanish identically when $k$ is sufficiently large. For instance, with the above notations, it follows that 
\begin{equation}\label{6.16}
H_k^{j}(x,y)=0, \qquad\text{ when }\quad kN -j-1\geq m+1.
\end{equation} 
In particular, since $0\leq j\leq N-2$, it follows that
\begin{equation}\label{6.17}
H_k^{j}(x,y)=0,\qquad\text{ when }(k-1)N\geq m,
\end{equation} 
completing the proof.
\end{proof}

\begin{Remark}\label{Re6.3}
It is interesting to note that conditions (ii) and (iii) in \thref{th6.1} are equivalent to the condition that $\om_n(x,y)=0$ for $n$ sufficiently large, but it does not seem easy to use the latter fact and formula \eqref{4.16}  to prove directly that Hadamard's coefficients $H_k^{j}(x,y)$ also vanish for $k$ sufficiently large, without going through the $\tau$-function. The problem is that, even though only finitely many of the $\om_n$'s are nonzero, the sum on the right-hand side of \eqref{4.16} always starts from $n=1$, so we will always have a few nonzero terms. As a corollary from \thref{th6.1}, we see that for $W\in\Gr_0$, the nonzero coefficients $\om_n(x,y)$ have the exact form needed to get $0$ on the right-hand side of \eqref{4.16} when $k$ is sufficiently large. Earlier proofs \cite{Ha2,I07} of \thref{th6.1} in the case $N=2$ use different properties of the rational KdV potentials and the corresponding spectral curves $w^2=z^{2k+1}$ to show that the finite sum in \eqref{4.19} vanishes for $k$ sufficiently large, establishing a one-to-one correspondence between the natural cells in $\Gr_0^{(2)}$ under the KdV flows and the index of the first vanishing coefficient in the heat kernel, see \cite[Remark 3.1]{I07}.
\end{Remark}

\begin{Remark}\label{Re6.4}
 When $N=2$, the  potentials $u_0(x)$ for which the Schr\"odinger operator $\cL=\pd_x^2+u_0(x)$ satisfies the equivalent conditions in \thref{th6.1} are precisely the rational solutions of the KdV equation which vanish at $x=\infty$, discovered by Airault, McKean and Moser \cite{AMM}. They can also be obtained by successive applications of the Darboux transformation \cite{Dar} starting with the operator $\pd_x^2$ as shown by Adler and Moser \cite{AM}, and exhaust the Schr\"odinger operators of rank one which possess the bispectral property as proved by Duistermaat and Gr\"unbaum \cite{DG}. Using condition (iii) in \thref{th6.1}, the work of Krichever \cite{Krichever2} on the rational solutions of the KP equation and Wilson's characterization of all rank-one bispectral rings \cite{Wilson}, we establish similar characterizations for the operators $\cL$ satisfying the equivalent conditions in \thref{th6.1}. The precise statements are given in \thref{th8.1}, Proposition~\ref{pr9.1} and \reref{Re9.2}. 
 \end{Remark}

\section{Explicit examples}\label{se7}

Since the case $N=2$ has been studied extensively in the literature, in this section we present a few explicit examples when $N\geq 3$.

\begin{Example}\label{Ex7.1}
Consider the space
$$W=\clspan\{z^{-1},1,z^2,z^3,\dots\}\in \Gr_0^{(3)}\qquad \text{ and note that }\qquad A_W=z^3\Cset[z].$$ 
The corresponding tau-function is the elementary Schur polynomial 
$$\cS_{2}(s_1,s_2)=\frac{s_1^2}{2}+s_2.$$ 
If we set 
\begin{equation}\label{7.1}
\tau(x;s)=\cS_{2}(s_1+x,s_2)=\frac{(s_1+x)^2}{2}+s_2, 
\end{equation}
then the Baker functions can be computed from \eqref{6.5} 
\begin{subequations}\label{7.2}
\begin{align}
\Psi(x,z)&=\left(1-\frac{s_1+x}{\tau(x;s)\,z}\right)e^{xz}
\intertext{ and }
\Psi^*(x,z)&= \left(1+\frac{s_1+x}{\tau(x;s)\,z}+\frac{1}{\tau(x;s)\,z^2}\right)e^{-xz}.
\end{align}
\end{subequations}
Strictly speaking, we should set $s_1=s_2=0$ in the formulas above in order to get the stationary Baker function and its adjoint, but we can leave $s_1$ and $s_2$ as free parameters. Equivalently, we can think of 
$\Psi(x,z)$ and $\Psi^*(x,z)$ as the stationary Baker functions corresponding to the space 
$$W_s=\clspan\left\{\left(\frac{s_1^2}{2}-s_2\right)z-s_1+\frac{1}{z},-s_1z+1,z^2,z^3,\dots\right\}\in \Gr_0^{(3)},$$
which depends on the free parameters $s_1,s_2$.  The space $W_s$ can be obtained from $W$ by applying the reverse KP flows $s_1,s_2$, namely $W_s=e^{-s_1z-s_2z^2}W$. 

If we take now $z^3\in A_W=A_{W_s}$, we can construct a third-order differential operator $\cL=\cK_{z^3}$ via the Krichever's correspondence \eqref{6.2}. A short computation shows that 
\begin{equation}\label{7.3}
\cL=\cK_{z^3}=\pd_x^3+u_1(x)\pd_x+u_0(x),
\end{equation}
where 
\begin{equation}\label{7.4}
u_1(x)=-\frac{3(x^2+2s_1x+s_1^2-2s_2)}{2\tau^2(x;s)} \quad\text{ and }\quad
u_0(x)=-\frac{6s_2(x+s_1)}{\tau^3(x;s)}.
\end{equation}
Using equations \eqref{7.2} we can easily compute the coefficients in the expansion of the resolvent $\fR(\cL)$ defined in \eqref{4.6}, and we find
\begin{subequations}\label{7.5}
\begin{align}
\om_1(x,y)&=\frac{(x-y)(xy+s_1(x+y)+s_1^2-2s_2)}{2\tau(x;s)\tau(y;s)}\\
\om_2(x,y)&=\frac{x^2-2xy-2s_1y-s_1^2+2s_2}{2\tau(x;s)\tau(y;s)}\\
\om_3(x,y)&=-\frac{x+s_1}{\tau(x;s)\tau(y;s)}\\
\om_k(x,y)&=0, \qquad \text{for }k\geq 4.
\end{align}
\end{subequations}
From these formulas, it follows immediately that $m=3$ in \eqref{6.15} and since $N=3$, we conclude from \eqref{6.17} that 
$$
H_k^j(x,y)=0, \qquad \text{for all }k\geq 2 \text{ and }j\in\{0,1\}.
$$
Computing the first two coefficients using  \eqref{4.16}  in \thref{th4.4}  with $k=1$ and $j=0,1$ we see that
\begin{subequations}\label{7.6}
\begin{align}
H_1^1(x,y)&=-\frac{3(xy+s_1(x+y)+s_1^2-2s_2)}{2\tau(x;s)\tau(y;s)}\\
H_1^0(x,y)&=-\frac{3(x+s_1)}{2\tau(x;s)\tau(y;s)}.
\end{align}
\end{subequations}

With the explicit formulas above and using the fact that $\Ai(z)$ satisfies the differential equation $\Ai''(z)=z\Ai(z)$, one can check directly that 
\begin{equation}
\cH(x,y,t)= \frac{1}{\sqrt[3]{3t}}\Ai\left(-\frac{x-y}{\sqrt[3]{3t}}\right)\left(1+H_1^{0}(x,y)t\right) -  \frac{1}{\sqrt[3]{9t^2}}\Ai'\left(-\frac{x-y}{\sqrt[3]{3t}}\right)  H_1^{1}(x,y)t,
\end{equation}
is the heat kernel  of the equation $\pd_t v =\cL v$ for the third-order operator $\cL$ given in \eqref{7.3}-\eqref{7.4}.
\end{Example}

\begin{Remark}\label{Re7.2}
Note that with the notations in Example \ref{Ex7.1}, we have $W_s\in\Gr_0^{(N)}$ for every $N\geq 3$, and the operators $\cK_{z^N}$ will have the same resolvent. Thus, we can use the above computations to obtain examples of $N$-th order operators having heat kernels with finitely many terms. Indeed, if we set 
\begin{equation}\label{7.8}
\cL_N=\cL^{N/3},
\end{equation}
where $\cL$ is the operator in \eqref{7.3}, then $\cL_N=\cK_{z^N}$ is a differential operator for every $N\geq 3$, and  the coefficients of the resolvent $\fR(\cL_N)$ are given in \eqref{7.5}. It is easy to see that if we pick
\begin{subequations}\label{7.9}
\begin{align}
H_1^{N-2}(x,y)&=-\ka_N\,\frac{N(xy+s_1(x+y)+s_1^2-2s_2)}{2\tau(x;s)\tau(y;s)}\\
H_1^{N-3}(x,y)&=-\ka_N\,\frac{N(x+s_1)}{2\tau(x;s)\tau(y;s)},
\end{align}
\end{subequations}
then 
$$\ka_N\,\frac{H_1^{N-2}(x,y)}{N} \pd_x^{N-2} D_N+\ka_N\,\frac{H_1^{N-3}(x,y)}{N} \pd_x^{N-3} D_N=\sum_{k=1}^{3}\om_k(x,y)\pd_x^{-k},$$
because the left-hand side is independent of $N$, as long as $N\geq3$, and is equal to the right-hand side when $N=3$ by the arguments in Example~\ref{Ex7.1}. This combined with \eqref{4.11} means that all coefficients, except the ones given in \eqref{7.9} are identically zero. We can also obtain this directly following the same steps as in Example~\ref{Ex7.1}: equation \eqref{6.17} tells us that all coefficients $H_k^{j}(x,y)$ with $k\geq 2$ are equal to $0$. If we set $k=1$, we can use \eqref{6.16} to deduce that $H_1^{j}(x,y)=0$ when $j\leq N-5$. A direct computation using \eqref{4.16} shows that $H_1^{N-4}(x,y)=0$, and for $j=N-2$ and $j=N-3$ we obtain the formulas in \eqref{7.9}. Therefore 
\begin{align*}
\cH(x,y,t)=& \frac{1}{\sqrt[N]{t}}A_N\left(\frac{x-y}{\sqrt[N]{t}}\right)
+ \frac{1}{\sqrt[N]{t^{N-2}}}A_N^{(N-3)}\left(\frac{x-y}{\sqrt[N]{t}}\right)H_1^{N-3}(x,y)t \\
&+\frac{1}{\sqrt[N]{t^{N-1}}}A_N^{(N-2)}\left(\frac{x-y}{\sqrt[N]{t}}\right)H_1^{N-2}(x,y)t
\end{align*}
is the heat kernel of the equation 
$$\pd_t v =\ka_N\cL_N v$$ 
for the $N$-th order operator $\cL_N$ in \eqref{7.8}.
\end{Remark}

\begin{Example}\label{Ex7.3}
We can apply Corollary~\ref{Co4.6} to Example~\ref{Ex7.1} and Remark~\ref{Re7.2} to obtain another set of operators having finitely many nonzero Hadamard's coefficients. Indeed, with the notations in  Example~\ref{Ex7.1}, we can compute the formal adjoint of the operator $\cL$ in \eqref{7.3} and we obtain
\begin{equation}\label{7.10}
\cL^*=-\pd_x^3-\pd_x\cdot u_1(x)+u_0(x)=-\pd_x^3- u_1(x)\pd_x+u^*_0(x),
\end{equation}
where $u_1(x)$ is given in \eqref{7.4} and 
\begin{equation}\label{7.11}
u^*_0(x)=u_0(x)-u_1'(x)=-\frac{3(x+s_1)((s_1+x)^2-2s_2)}{2\tau^3(x;s)}.
\end{equation}
Using \eqref{7.6} and setting
\begin{subequations}\label{7.12}
\begin{align}
\tilde{H}_1^1(x,y)&=-H_1^1(y,x)=\frac{3(xy+s_1(x+y)+s_1^2-2s_2)}{2\tau(x;s)\tau(y;s)}\\
\tilde{H}_1^0(x,y)&=H_1^0(y,x)=-\frac{3(y+s_1)}{2\tau(x;s)\tau(y;s)},
\end{align}
\end{subequations}
it follows that the heat kernel of 
$$\pd_t v =\cL^* v$$
for the operator $\cL^*$ in \eqref{7.10} is 
\begin{equation*}
\cH(x,y,t)= \frac{1}{\sqrt[3]{3t}}\Ai\left(\frac{x-y}{\sqrt[3]{3t}}\right)\left(1+\tilde{H}_1^{0}(x,y)t\right) +  \frac{1}{\sqrt[3]{9t^2}}\Ai'\left(\frac{x-y}{\sqrt[3]{3t}}\right) \tilde{H}_1^{1}(x,y)t.
\end{equation*}
We can obtain all this directly from \thref{th6.1} by observing that $\cL^*=\cK_{-z^3}$ if we work with the subspace 
$$W_s^*=\clspan\left\{\frac{s_1^2}{2}+s_2-\frac{s_1}{z}+\frac{1}{z^2},z,z^2,z^3,\dots\right\}\in \Gr_0^{(3)},$$
which is dual to $W_s$ in Example~\ref{Ex7.1} with respect to the bilinear form
$$\langle f, g\rangle =\res_z[f(z)g(-z)].$$
Note that if we set $s_1=s_2=0$ we obtain the space
$$W_0^*=\clspan\left\{\frac{1}{z^2},z,z^2,z^3,\dots\right\}\in \Gr_0^{(3)},$$
whose tau-function is the Schur polynomial $\cS_{1,1}$ corresponding to the partition $\nu=(1,1)$. We can think of $W_s^*$ as the subspace obtained from $W_0^*$ by applying appropriately  the KP flows. Since $W_s^*\in\Gr_0^{(N)}$ for every $N\geq 3$, we can obtain simple formulas for the heat kernel of the operators $\cK_{z^N}$ as we did in Remark~\ref{Re7.2}, or we can apply directly Corollary~\ref{Co4.6} to the operators $\cL_N$ in \eqref{7.8} for all $N\geq 3$.
\end{Example}


\section{Rational solutions of the Gelfand-Dickey hierarchy}\label{se8}

In this section we explain how the equivalent conditions in \thref{th6.1} characterize the rational solutions of the $N$-th Gelfand-Dickey hierarchy with coefficients $u_j$ vanishing at $x=\infty$.

Recall that for $W\in\Gr$, we can define a unique Baker function $\Psi_W(s;z)$ which depends on the KP flows $s=(s_1,s_2,\dots)$ and $z\in S^{1}$, such that 
\begin{enumerate}[(i)]
\item $\Psi_W(s;z)$ has the form 
\begin{equation}\label{8.1}
\Psi_W(s;z)=\left(1+\sum_{i=1}^{\infty}\frac{\psi_i(s)}{z^{i}}\right)e^{\sum_{j=1}^{\infty}s_jz^j},
\end{equation}
\item $\Psi_W(s;\cdot)$ belongs to $W$ for all $s$,
\end{enumerate}
see \cite[Proposition 5.1]{SW}. We identity $s_1$ and $x$, and with this convention, the stationary Baker function $\Psi_W(x,z)$ discussed in \seref{se6} can be obtained by setting $s_j=0$ for $j>1$. Using the coefficients $\psi_i(s)$ in \eqref{8.1} we define 
\begin{equation}\label{8.2}
S_W=1+\sum_{j=1}^{\infty}\psi_j(s)\pd_x^{-j},
\end{equation}
to be the formal wave operator corresponding to $W$, and we set 
\begin{equation}\label{8.3}
\cP_W=S_W\pd_xS_W^{-1}.
\end{equation}
Then $\cP=\cP_W$ in equation \eqref{8.3} is a formal pseudo-differential operator of the form
\begin{equation}\label{8.4}
\cP=\pd_x+\sum_{j=1}^{\infty}v_j(s)\pd_x^{-j},
\end{equation}
which satisfies the equations of the KP hierarchy
\begin{equation}\label{8.5}
\frac{\pd \cP}{\pd s_j}=[(\cP^j)_+,\cP].
\end{equation}
In particular, if $W\in \Gr^{(N)}$, then $\cL=\cP^N$ is an $N$-th order differential operator which solves the $N$-th Gelfand-Dickey hierarchy \eqref{5.1}. With these notations, we can formulate an  analog of the rational solutions of the KdV hierarchy \cite{AM,AMM} within the context of the Gelfand-Dickey hierarchy as follows.

\begin{Theorem}\label{th8.1}
Let $\cL=\pd_x^N+u_{N-2}(x,s_2,s_3)\pd_x^{N-2}+\cdots+u_0(x,s_2,s_3)$ be an $N$-th order differential operator whose coefficients depend on $x$, $s_2$ and $s_3$. The following conditions are equivalent.
\begin{enumerate}[\rm(i)]
\item The operator $\cL$ satisfies the Gelfand-Dickey equations \eqref{5.1} for $m=2$ and $m=3$, and its coefficients $u_k$ are rational functions of $x$ that vanish at $x=\infty$:
\begin{equation}\label{8.6}
\lim_{x\to\infty }u_{k}(x,s_2,s_3)=0,\qquad \text{ for all }\quad k=0,1,\dots,N-2.
\end{equation}
\item There exists $W\in\Gr_0^{(N)}$ such that $\cL=(\cP_W)^{N}|_{s_4=s_5=\cdots=0}$.
\end{enumerate}
\end{Theorem}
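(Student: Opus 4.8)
The plan is to establish the two implications separately. The direction \textrm{(ii)}$\Rightarrow$\textrm{(i)} follows from Sato's theory together with the description of $\Gr_0$ recalled in \seref{se6}, whereas \textrm{(i)}$\Rightarrow$\textrm{(ii)} rests on Krichever's classification of the rational solutions of the KP equation \cite{Krichever2}. Throughout I regard $\cL$ as a function of $x=s_1$, $s_2$ and $s_3$, and I write $\cP=\cL^{1/N}$ for the unique $N$-th root of $\cL$ of the form \eqref{8.4}, so that $\cP^N=\cL$.

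Assume first that $W\in\Gr_0^{(N)}$. Since multiplication by $e^{\sum_{j\ge1}s_jz^j}$ commutes with multiplication by $z^N$, the KP flows preserve the subspace $\Gr^{(N)}$; hence for every value of the times $s$ the operator $\cP_W^N$ is a monic differential operator of order $N$ which solves the $N$-th Gelfand-Dickey hierarchy \eqref{5.1}, in particular its equations for $m=2$ and $m=3$, and $\cL$ is obtained from it by setting $s_4=s_5=\cdots=0$. Because $W\in\Gr_0$, the tau-function $\tau_W$ is a polynomial in finitely many of the $s_j$ (see \cite[Proposition 8.5]{SW}), so Sato's formulas \eqref{6.5} show that the coefficients $\psi_i$ of the wave operator, evaluated at $s_4=\cdots=0$, are ratios of polynomials in $x,s_2,s_3$ with a common denominator $\tilde{\tau}(x,s_2,s_3)=\tau_W(x,s_2,s_3,0,\dots)$ that is a polynomial in $x$; consequently each $u_k$, being a differential polynomial in the $\psi_i$, is rational in $x$. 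The vanishing $\lim_{x\to\infty}u_k=0$ is obtained exactly as in the proof of the equivalence \textrm{(ii)}$\Leftrightarrow$\textrm{(iii)} of \thref{th6.1}: the expansions \eqref{6.6} now hold with the $s_2,s_3$ dependence retained, so $\psi_i(x,s_2,s_3)\to0$ as $x\to\infty$ for $i\ge1$, and every monomial occurring in $u_k$ contains a factor $\psi_i$ with $i\ge1$ or one of its derivatives.

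For the converse, assume \textrm{(i)}. The coefficients $v_j$ of $\cP=\cL^{1/N}$ are differential polynomials in $u_0,\dots,u_{N-2}$, hence rational in $x$ and vanishing at $x=\infty$. Since $(\cL^{m/N})_+=(\cP^m)_+$, setting $E=\frac{\pd\cP}{\pd s_m}-[(\cP^m)_+,\cP]$ one computes $\sum_{i=0}^{N-1}\cP^i E\,\cP^{N-1-i}=\frac{\pd(\cP^N)}{\pd s_m}-[(\cP^m)_+,\cP^N]=\frac{\pd\cL}{\pd s_m}-[(\cL^{m/N})_+,\cL]$, which vanishes for $m=2,3$ by hypothesis; inspecting the leading term of $\sum_i\cP^i E\,\cP^{N-1-i}$ then forces $E=0$, so $\cP$ satisfies the KP equations \eqref{8.5} for $s_2$ and $s_3$. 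Thus $\cP$ is a rational solution of the KP equation vanishing at infinity, and by Krichever's description of such solutions \cite{Krichever2} it is of Calogero--Moser type; in particular it extends to a solution of the entire KP hierarchy, which by the Segal--Wilson dictionary is of the form $\cP_W$ for a point $W\in\Gr$ with $\cP=\cP_W|_{s_4=s_5=\cdots=0}$, and whose tau-function $\tau_W$ is a polynomial, so $W\in\Gr_0$ by \cite[Proposition 8.5]{SW}. Finally $\cP_W^N|_{s_4=\cdots=0}=\cP^N=\cL$ is a differential operator, so $z^N\in A_W$, i.e. $W\in\Gr^{(N)}$; therefore $W\in\Gr_0^{(N)}$ and $\cL=(\cP_W)^N|_{s_4=s_5=\cdots=0}$, as required.

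I expect \textrm{(i)}$\Rightarrow$\textrm{(ii)} to be the main obstacle, and within it the step that upgrades the hypothesis — that $\cL$ only solves the two Gelfand-Dickey flows $m=2,3$ — to a point of the Grassmannian governing the whole hierarchy. This is precisely where the rigidity of rational solutions enters: Krichever's result shows that a rational solution of the KP equation decaying at $\infty$ is automatically of Calogero--Moser type, hence solves all the higher flows, and its tau-function is a polynomial. A secondary point requiring care is that the $N$-reduction $\cP^N=\cL$ together with the decay at $\infty$ pins $W$ down to the one-cusp locus $\Gr_0$ (with its normalization), rather than to a larger stratum of rational KP solutions; this uses the identification of $\Gr_0$ with the set of points having polynomial tau-function and with rational spectral curves carrying a single cusp-like singularity, as recorded in \seref{se6}.
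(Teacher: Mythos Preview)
Your overall architecture is the same as the paper's: pass to $\cP=\cL^{1/N}$, verify that $\cP$ satisfies the KP equations for $j=2,3$ with rational coefficients vanishing at $\infty$, invoke Krichever's classification of rational KP solutions, and then use the $N$-reduction to land in $\Gr_0^{(N)}$. The computation $\sum_{i}\cP^iE\,\cP^{N-1-i}=0\Rightarrow E=0$ is a nice detail the paper does not spell out.

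However, there is a genuine gap in the step where you conclude $W\in\Gr_0$. Krichever's theorem \cite{Krichever2}, together with \cite{Shiota,Wilson}, places a rational-in-$x$ KP solution vanishing at $\infty$ in the \emph{adelic} Grassmannian $\Grad$, not in $\Gr_0$. For $W\in\Grad$ the tau-function is a polynomial in $s_1=x$ (with constant leading coefficient), but when some of the one-point conditions are supported at $\lambda\neq 0$ it acquires exponential factors $e^{\sum_j s_j\lambda^j}$ and is \emph{not} a polynomial in all the $s_j$; so the appeal to \cite[Proposition~8.5]{SW} is not available at this point, and your chain ``Calogero--Moser type $\Rightarrow$ $\tau_W$ polynomial $\Rightarrow$ $W\in\Gr_0$'' breaks. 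The paper handles exactly this issue by first landing in $\Grad$ (Proposition~\ref{pr8.2}) and then proving separately that $\Grad\cap\Gr^{(N)}=\Gr_0^{(N)}$ (\leref{le8.3}); the latter argument uses the $z^N$-invariance of $W$ to show that any one-point condition in $C$ supported at $\lambda\neq 0$ forces the full jet $e(0,\lambda),\dots,e(n,\lambda)$ to lie in $C$, whence it can be cancelled against the corresponding factor of $r(z)$, reducing to conditions supported at $0$. You acknowledge in your last paragraph that pinning $W$ down to the one-cusp locus is delicate, but the argument you actually give does not do it; inserting a proof of \leref{le8.3} (or an equivalent statement) is what is missing.
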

Note that condition (ii) above tells us that $\cL$ can be extended to a solution of the full Gelfand-Dickey hierarchy, and it will depend only on finitely many of the time variables $s_j$, as long as it satisfies \eqref{8.6} and the first two nontrivial equations. If we combine this with condition (iii) in \thref{th6.1}, we see that the heat kernel for the operator $\cL$ has finitely many terms if and only if $\cL$ is a  solution of the Gelfand-Dickey hierarchy, whose coefficients $u_j$ are rational functions of $x$ vanishing at $\infty$.

The proof of \thref{th8.1} can be deduced by combining the work of Krichever \cite{Krichever2} on the rational solutions of the KP equation with Wilson's description of the adelic Grassmannian $\Grad$ \cite{Wilson} which parametrizes these solutions.
Following Wilson, for $m\in\Nset_0$ and $\la\in\Cset$, we denote by $e(m,\la)$ the linear functional 
\begin{equation*}
\langle e(m, \lambda), g\rangle = g^{(m)}(\lambda)
\end{equation*}
on $\Cset[z]$ and by $\cC$  the infinite-dimensional vector space over $\Cset $, spanned by $e(m, \lambda)$. A linear functional $c\in\cC$ is called a {\it one point condition} if it involves 
derivatives at only one point $\la$, i.e. if $c$ can be written in the form
\begin{equation}\label{8.7}
c=\sum_{\text{finitely many $m$'s}} a_me(m, \la), \qquad\text{ for some }\la\in\Cset \text{ and }a_m\in\Cset.
\end{equation}
We call $\la$ the support of the condition $c$ in \eqref{8.7}. For each finite-dimensional subspace $C \subset \cC$ we set 
\begin{equation*}
V_C=\{ g\in \Cset[z]:\; \langle c,g\rangle=0 \text{ for } c \in C\}.
\end{equation*}
The {\em adelic Grassmannian} $\Grad$ consists of all spaces $W\subset \Cset(z)$ of the form
\begin{equation}\label{8.8}
W=\frac{1}{r(z)}V_C,
\end{equation} 
where 
\begin{enumerate}[(i)]
\item $C$ has a basis $\{c_1,\dots,c_n\}$ consisting of one point conditions $c_j\in\cC$, and
\item $r(z)=\prod (z-\lambda_i)^{n_i}$ is a polynomial of degree $n=\dim(C)$, where $n_i$ denotes the number 
of conditions $c_j$ supported at the point $\lambda_i$.
\end{enumerate}
If $W\in \Grad$, we can assume that its $L^2$ closure $\overline{W}$ belongs to $\Gr$ by replacing (if necessary) the unit circle in the definition of $\Gr$ with a circle of sufficiently large radius $R$, so that the supports of all conditions $c_j$ in (i) are inside the circle $|z|=R$. Conversely, we can recover $W$ from $\overline{W}$ by taking $W=\overline{W}\cap \Cset(z)$. In view of this, we can think of every $W\in \Grad$ as a point in $\Gr$, and thus we can associate a solution of the KP hierarchy. In particular, if all conditions $c_j$ in (i) above are supported at $0$, we obtain the Grassmannian $\Gr_0$ discussed in \seref{se6}, so  $\Gr_0\subset \Grad$.
With these definitions, we can summarize some of the statements in \cite{Krichever2,Shiota,Wilson} in a form suitable for the proof of \thref{th8.1} as follows.
\begin{Proposition}\label{pr8.2}
Suppose that $\cP=\pd_x+\sum_{k=1}^{\infty}v_k(x,s_2,s_3)\pd_x^{-k}$ is a formal pseudo-differential operator whose coefficients $v_k$ depend on  $x$, $s_2$ and $s_3$. The following conditions are equivalent.
\begin{enumerate}[\rm(i)]
\item $\cP$ satisfies the KP equations \eqref{8.5} for $j=2$ and $j=3$, and its coefficients $v_k$ are rational functions of $x$ that vanish at $x=\infty$.
\item There exists $W\in\Grad$ such that  $\cP=(\cP_W)|_{s_4=s_5=\cdots=0}$, where $\cP_W$ is the operator constructed from $W$ in \eqref{8.3}.
\end{enumerate}
\end{Proposition}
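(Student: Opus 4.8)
The plan is to derive Proposition~\ref{pr8.2} by assembling three known results — Krichever's analysis \cite{Krichever2} of the rational solutions of the KP equation, Wilson's description \cite{Wilson} of the adelic Grassmannian $\Grad$, and the complements in Shiota \cite{Shiota} — the only genuinely delicate point being to match our hypothesis (that only the $j=2$ and $j=3$ flows in \eqref{8.5} are imposed, on coefficients depending only on $x,s_2,s_3$) with the full-hierarchy statements in those references.

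I would first dispose of the easy implication (ii)$\Rightarrow$(i). If $W\in\Grad$, then $\cP_W$ satisfies all the equations \eqref{8.5}; since the vector fields $\pd/\pd s_j$ commute, the restriction $\cP:=\cP_W|_{s_4=s_5=\cdots=0}$ still satisfies \eqref{8.5} for $j=2,3$ in the remaining variables $x=s_1,s_2,s_3$. By Wilson's explicit description of the points of $\Grad$ — equivalently, by the fact that the associated tau-function is, for fixed higher times, a polynomial in $x$ — the coefficients $v_k$ of $\cP_W$, being universal differential polynomials in $\pd_x\log\tau_W$ via Sato's formulas \eqref{6.5}, are rational functions of $x$, and a degree count in $x$ forces $\lim_{x\to\infty}v_k=0$. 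This gives (i).

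For (i)$\Rightarrow$(ii) I would proceed as follows. The pair of evolution equations \eqref{8.5} for $j=2,3$ is equivalent, via the standard (Zakharov--Shabat) compatibility, to the statement that the potential $u$ associated with $\cP$ solves the scalar KP equation in the variables $(x,s_2,s_3)$; one then produces a tau-function $\tau(x,s_2,s_3)$ with $\cP$ recovered from $\tau$ by \eqref{6.5}. Because the coefficients $v_k$ are rational in $x$ and vanish at $x=\infty$, Krichever's results \cite{Krichever2} apply: $\tau$ may be taken to be a \emph{polynomial} in $x$, of some degree $n$, whose roots evolve in $s_2,s_3$ by a constrained Calogero--Moser flow, and every such solution extends uniquely to a solution of the whole KP hierarchy; by Wilson \cite{Wilson}, with the gaps filled in by Shiota \cite{Shiota}, the totality of these solutions is exactly $\{\cP_W:W\in\Grad\}$. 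Concretely, the stationary operator $\cP|_{s_2=s_3=0}$ is the $s_j=0$ ($j\ge2$) restriction of $\cP_W$ for a unique $W\in\Grad$; since $\cP$ and $\cP_W|_{s_4=\cdots=0}$ then solve the same first-order evolution equations \eqref{8.5} for $j=2,3$ with the same data at $s_2=s_3=0$, they coincide, which is (ii).

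The step I expect to be the main obstacle is precisely this passage from "only two flows" to the full hierarchy: one must be sure that imposing the $s_2$- and $s_3$-flows \emph{together} with rationality in $x$ already pins down a point of $\Grad$ (the $s_2$-flow alone would not), and that the continuation to the remaining times $s_4,s_5,\dots$ is unique — this is exactly what Krichever's constrained particle picture and Wilson's bijection provide, so the work is in quoting them in a form fitting the present set-up rather than in any new computation. A secondary point, also handled by the polynomiality of $\tau$ in $x$, is to make sure the tau-function and the rationality assertions are read globally in $x$ and not merely as formal or local statements.
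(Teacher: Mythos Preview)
Your overall strategy matches the paper's: pass to the scalar KP equation for $u=2v_1$, invoke Krichever--Wilson--Shiota to locate $W\in\Grad$ with $v_1=\pd_x^2\log\tau_W$, and then argue that the full operator $\cP$ coincides with $\cP_W|_{s_4=s_5=\cdots=0}$. The easy implication (ii)$\Rightarrow$(i) is handled essentially as in the paper.

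The gap is in your matching step for (i)$\Rightarrow$(ii). You assert that ``one then produces a tau-function $\tau(x,s_2,s_3)$ with $\cP$ recovered from $\tau$ by \eqref{6.5}'' and that ``the stationary operator $\cP|_{s_2=s_3=0}$ is the $s_j=0$ restriction of $\cP_W$''. Neither claim is justified: the cited results produce $W\in\Grad$ only from the \emph{scalar} solution $v_1$, and a priori the higher coefficients $v_2,v_3,\ldots$ of $\cP$ need not match those of $\cP_W$ anywhere. Your subsequent ``same initial data, same evolution'' argument then cannot close the gap either, because the $j=2$ flow in \eqref{8.5} reads
\[
\frac{\pd v_{\ell}}{\pd s_2}=2v_{\ell+1}'+v_{\ell}''-\sum_{k=1}^{\ell-1}\binom{-k}{\ell-k}v_kv_1^{(\ell-k)},
\]
so $\pd_{s_2}v_\ell$ depends on $v_{\ell+1}$; this is not a closed Cauchy problem in $s_2$, and uniqueness given data at $s_2=s_3=0$ does not follow.

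The paper uses this same identity, but differently: at each fixed $(s_2,s_3)$, once $v_n=\tilde v_n$ for all $n\le\ell$ (with $\tilde v_n$ the coefficients of $\cP_W|_{s_4=\cdots=0}$), subtracting the two versions of the displayed equation gives $\pd_x(v_{\ell+1}-\tilde v_{\ell+1})=0$, and then the hypothesis $v_k,\tilde v_k\to0$ as $x\to\infty$ forces $v_{\ell+1}=\tilde v_{\ell+1}$. Thus the decay at infinity is used not merely to normalise a tau-function but as the mechanism that propagates agreement from $v_1$ to all higher $v_k$ by induction on the coefficient index; this is the concrete step your sketch is missing.
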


\begin{proof} The implication {\rm{(ii)}$\Rightarrow$\rm{(i)}} follows immediately from Wilson's work \cite{Wilson} since for $W\in\Grad$, the tau-function $\tau_W(s)$ is a polynomial in $s_1=x$ with constant leading coefficient, which implies that the coefficients $\psi_j$ are rational functions of $x$ and tend to $0$ as $x\to\infty$. Formula \eqref{8.3} shows that this is also true for the coefficients of the operator $\cP_W$.

Conversely, suppose now that (i) holds, and therefore $\cP$ satisfies the KP equations \eqref{8.5} for $j=2$ and $j=3$. It is well-known that the function 
$u(x,s_2,s_3)=2v_1(x,s_2,s_3)$ satisfies the KP equation
\begin{equation*}						
3\pd^2_{s_2}u=\left(4\pd_{s_3}u-u'''-6uu'\right)'.
\end{equation*}
where $'$ denotes the derivative with respect to $x$, and $\pd_{s_j}=\pd/\pd_{s_j}$ for $j=2,3$.
Since $u(x,s_2,s_3)$ is a rational function of $x$ and $u(x,s_2,s_3)\to 0$ as $x\to\infty$, the results in \cite{Krichever2,Shiota,Wilson} imply that there exists $W\in\Grad$ such that 
$$v_1(x,s_2,s_3)=\frac{\pd^2 \ln \tau_W(x,s_2,s_3,0,0,\dots)}{\pd x^2}. $$  
It is not hard to see now that for this $W\in\Grad$ we have $\cP=\cP_W|_{s_4=s_5=\cdots=0}$. Indeed, both $\cP$ and $\cP_W$ are formal pseudo-differential operators of the form given in \eqref{8.4} with rational coefficients vanishing at $x=\infty$. Moreover, if we consider $\tilde{\cP}=\cP_W|_{s_4=s_5=\cdots=0}$ we have 
\begin{equation*}
\tilde{\cP}=
\pd_x+\sum_{j=1}^{\infty}\tilde{v}_j(x,s_2,s_3)\pd_x^{-j}, \qquad \text{ and }\quad \tilde{v}_1(x,s_2,s_3)=v_1(x,s_2,s_3).
\end{equation*}
Using now the fact that both $\cP$ and $\tilde{\cP}$ satisfy the KP equations \eqref{8.5} for $j=2$, we can deduce by induction on $n$ that $\tilde{v}_n(x,s_2,s_3)=v_n(x,s_2,s_3)$ for all $n\in\Nset$. Indeed, we already know that this is true when $n=1$, and computing the coefficients of $\pd_x^{-\ell}$ on both sides of \eqref{8.5} for $j=2$ yields 
$$\frac{\pd v_{\ell}}{\pd s_2}=2v_{\ell+1}'+v_{\ell}''-\sum_{k=1}^{\ell-1}\binom{-k}{\ell-k}v_kv_1^{(\ell-k)}.$$
Thus, if we assume that $\tilde{v}_n(x,s_2,s_3)=v_n(x,s_2,s_3)$ for all $n\leq \ell$, and if we subtract the above equations for $\cP$ and $\tilde{\cP}$ we see that
$$\frac {\pd (v_{\ell+1} -\tilde{v}_{\ell+1} )}{\pd x}=0,$$
and therefore $v_{\ell+1}(x,s_2,s_3) -\tilde{v}_{\ell+1}(x,s_2,s_3) =c_{\ell+1}(s_2,s_3)$ is independent of $x$. But if we let $x\to\infty$ we conclude that $c_{\ell+1}(s_2,s_3)=0$, completing the inductive step, and thus the proof.
\end{proof}

\begin{Lemma}\label{le8.3}
For $N\in\Nset$ we have $\Grad\cap \Gr^{(N)}=\Gr_0^{(N)}$.
\end{Lemma}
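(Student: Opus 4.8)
The plan is to establish the two inclusions separately. For ``$\Gr_0^{(N)}\subset\Grad\cap\Gr^{(N)}$'' nothing is needed beyond what is already recorded above: all conditions supported at $0$ are one-point conditions, so $\Gr_0\subset\Grad$, and intersecting with $\Gr^{(N)}$ gives $\Gr_0^{(N)}=\Gr_0\cap\Gr^{(N)}\subset\Grad\cap\Gr^{(N)}$. The real content is the reverse inclusion.

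To prove ``$\Grad\cap\Gr^{(N)}\subset\Gr_0^{(N)}$'', fix $W\in\Grad\cap\Gr^{(N)}$ and write $W=\tfrac1{r(z)}V_C$ as in \eqref{8.8}. First I would pass to a presentation with $\deg r$ minimal; then no factor $(z-\lambda)$ of $r$ can be cancelled against $V_C$, so for each root $\lambda$ of $r$ there is a polynomial $g\in V_C$ with $g(\lambda)\neq0$. (That the reduced presentation again has the prescribed one-point-condition form follows by carrying the $c_j$ through division by $(z-\lambda)$ with the Leibniz rule.) Since $W$ is invariant under multiplication by $z^N$, clearing the denominator $r(z)$ shows $z^NV_C\subset V_C$, so $V_C$ is a $\Cset[z^N]$-module. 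Because, for $W\in\Grad$, belonging to $\Gr_0$ is exactly the condition that all one-point conditions of $W$ --- equivalently all roots of $r$ --- be supported at $0$, it suffices to show that $z^NV_C\subset V_C$ forces every root of $r$ to equal $0$; combined with $W\in\Gr^{(N)}$ this gives $W\in\Gr_0^{(N)}$.

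The crux is that last implication. Suppose for contradiction that $\lambda\neq0$ is a root of $r$; let $c_1,\dots,c_k$ ($k\ge1$) be the one-point conditions supported at $\lambda$, involving derivatives at $\lambda$ of order at most $d$, and pick $g\in V_C$ with $g(\lambda)\neq0$. In the local ring $J=\Cset[z]/(z-\lambda)^{d+1}$, let $\zeta$ and $\gamma$ be the classes of $z^N$ and $g$; both are units of $J$, because $\lambda^N\neq0$ and $g(\lambda)\neq0$. As $V_C$ is $\Cset[z^N]$-stable, $z^{jN}g\in V_C$ for all $j\ge0$, so every such polynomial satisfies $c_1,\dots,c_k$; equivalently, $\zeta^j\gamma\in T_\lambda$ for all $j$, where $T_\lambda=\bigcap_i\ker(c_i)\subset J$ has codimension $k$. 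On the other hand, writing $z=\lambda+w$, the coefficient of $w^i$ in $\zeta^j=(\lambda+w)^{jN}\bmod w^{d+1}$ is $\binom{jN}{i}\lambda^{jN-i}$, which for fixed $i$ is a polynomial of degree exactly $i$ in $j$; hence the matrix whose $j$-th row is the coefficient vector of $\zeta^j$, for $j=0,1,\dots,d$, is invertible (scaling rows and columns by nonzero powers of $\lambda$ reduces it to $[\binom{jN}{i}]_{0\le i,j\le d}$, whose columns are $d+1$ linearly independent polynomials in $j$ evaluated at the $d+1$ distinct points $j=0,\dots,d$). Thus $\{\zeta^j\}_{j=0}^d$ is a basis of $J$, and multiplying by the unit $\gamma$ shows $\{\zeta^j\gamma\}_{j\ge0}$ spans $\gamma J=J$, contradicting $\zeta^j\gamma\in T_\lambda\subsetneq J$. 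Therefore $r$ has no nonzero root, and $W\in\Gr_0^{(N)}$.

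The step I expect to be the genuine obstacle is conceptual rather than computational: seeing why the adelic hypothesis cannot be dropped. For a root $\lambda\neq0$, multiplication by $z^N$ is a unit of the local ring at $\lambda$, so $z^N$-invariance of $W$ imposes, by itself, no constraint on the jet data of $W$ at $\lambda$ --- and $z^N$-invariant spaces whose spectral curves are singular away from $0$ (e.g.\ nodal ones) do occur in the larger rational Grassmannian. What rules them out here is the one-point nature of the conditions: after reduction, a single vector $g$ with $g(\lambda)\neq0$, pushed around by the powers of $z^N$, already has jets at $\lambda$ spanning the entire local jet space, leaving no room for a condition at $\lambda$. The two loose ends --- legitimacy of the reduced presentation and non-vanishing of the generalized Vandermonde determinant --- are routine.
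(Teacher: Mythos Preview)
Your proof is correct, and it takes a genuinely different route from the paper's.

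The paper works on the dual side, in the space $\cC$ of conditions, and does not reduce the presentation first. Instead it shows directly that any condition $c=\sum_{m\le n}a_m e(m,\lambda)\in C$ supported at $\lambda\neq0$ forces $e(m,\lambda)\in C$ for every $m\le n$, so that the conditions at $\lambda$ amount to divisibility by $(z-\lambda)^{n_\lambda}$ and cancel against $r(z)$. The mechanism is an inductive \emph{order-lowering} step: choosing a multiple $M=N\ell>n$, the invariance $z^MV_C\subset V_C$ implies that $g\mapsto\langle c,(z^M-\lambda^M)g\rangle$ is again a condition in $C$, supported at $\lambda$, of top order $n-1$ with nonzero leading coefficient $na_nM\lambda^{M-1}$; iterating drops the order to $0$.

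Your argument is the mirror image: you first pass to a minimal presentation, then work in the jet ring $J=\Cset[z]/(z-\lambda)^{d+1}$ and show that the powers $z^{jN}$ span $J$ (because $z^N$ is a unit at $\lambda\neq0$, via a Vandermonde-type determinant), so one element $g\in V_C$ with $g(\lambda)\neq0$ already saturates the local jet space and no condition at $\lambda$ can survive. In effect the paper lowers the order of conditions while you raise the order of jets; the underlying invertibility is the same, packaged dually.

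Trade-offs: the paper's induction is a line or two shorter and needs neither the reduction step nor the determinant computation, since it proves the stronger statement that conditions at $\lambda\neq0$ are always cancelable. Your version makes the conceptual point you emphasise---that $z^N$ is a local unit precisely when $\lambda\neq0$---completely explicit, and finishes in one stroke rather than an induction. The two ``loose ends'' you flag (legitimacy of the reduced adelic presentation and non-singularity of $[\binom{jN}{i}]_{0\le i,j\le d}$) are indeed routine; for the first, the key fact is that $C$ equals the full annihilator of $V_C$ inside $\cC$, so if every $g\in V_C$ vanishes at $\lambda$ then $e(0,\lambda)\in C$ and one can strip a factor $(z-\lambda)$ from both $V_C$ and $r$ while keeping the one-point-condition form.
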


\begin{proof}
Clearly, $\Gr_0^{(N)}\subset \Grad\cap \Gr^{(N)}$, so the nontrivial part is to establish the opposite inclusion. Suppose that $W\in  \Grad \cap \Gr^{(N)}$. By the definition of $\Grad$, there exists a finite-dimensional subspace $C \subset \cC$ which has a basis consisting of one point conditions and \eqref{8.8} holds. The main point is to show that 
\begin{equation}\label{8.9}
\text{if }\la\neq 0\quad\text{ and  }\quad c=\sum_{m=0}^{n} a_me(m, \la)\in C, \quad\text{ where }a_n\neq 0, 
\end{equation}
then 
\begin{equation}\label{8.10}
e(m, \la)\in C \qquad \text{ for all }\qquad m=0,\dots,n. 
\end{equation}
This means that we can ignore all conditions supported at $\la\neq 0$, because they simply imply that the polynomials in $V_C$ are divisible by $(z-\la)^k$, where $k$ is the number of the conditions supported at $\la$ in the basis of $C$, which will cancel with the same factor in $r(z)$. Thus, if we know that \eqref{8.9} implies \eqref{8.10} then we can assume that $C$ contains only conditions supported at $0$, which means that $W\in \Gr_0^{(N)}$. Suppose now that  \eqref{8.9} holds for some $n>0$. Since $W\in \Gr^{(N)}$ it follows that $V_C$ is invariant under the multiplication by $z^N$. Therefore, for every $g\in V_C$ we have
\begin{equation}\label{8.11}
\langle c, z^Ng(z) -\la^Ng(z) \rangle =0.
\end{equation}
It is easy to see that the last equation can be rewritten as $\langle \tilde{c}, g \rangle =0$, where 
$$\tilde{c}=\sum_{m=0}^{n-1} \tilde{a}_me(m, \la), \quad\text { and }\quad \tilde{a}_{n-1}=na_nN\la^{N-1}  \neq 0.$$
Since \eqref{8.11} holds for all $g\in V_C$, it follows that $\tilde{c}\in C$. Thus, if \eqref{8.9} holds, then $\tilde{c}\in C$ and since $\tilde{a}_{n-1}\neq 0$, we can iterate this process and prove \eqref{8.10} by induction on $n$.
\end{proof}

Since \leref{le8.3} is the key ingredient in the proof of \thref{th8.1} and in the bispectral characterization of the operators satisfying the equivalent conditions in \thref{th6.1}, we include an elegant and illuminating proof which was kindly provided by George Wilson.

\begin{proof}[Second proof of \leref{le8.3}]
Let $X$ be a framed curve; that is, its ring of functions $A$ is a subalgebra of $\Cset[z]$ and the corresponding map  $p : \Cset \to X$  is bijective. Suppose also that $A$ contains $\Cset[z^N]$ for some $N > 1$, so that now we have two maps 
\begin{equation}\label{compos}
\Cset \to X \to \Cset
\end{equation}
whose composition is  $z \mapsto z^N$. The lemma claims that in this case $p(0)$ is the only possible singular point of $X$. To see that, let's fix any other point $p(a)$ of $X$, and choose a disk $D$ containing $a$, small enough so that $z\mapsto z^N$  is an analytic isomorphism from $D$ to some small neighbourhood  $D'$ of $a^N$.  Restricting \eqref{compos} to $D$, we now have bijective analytic maps
\begin{equation}\label{analytic-maps}
D \to p(D) \to D'
\end{equation}
whose composition is an analytic isomorphism.  That implies that the inverse map to $p$ in \eqref{analytic-maps} is analytic (because it is the composition of the second map in \eqref{analytic-maps} with the inverse of the isomorphism $D \to D'$).  Thus the first map $p$ in \eqref{analytic-maps} is also an analytic isomorphism; in particular $p(D)$ contains no singular points.
\end{proof}

\begin{proof}[Proof of \thref{th8.1}]
Suppose first that (i) holds, i.e. $\cL$ is an operator whose coefficients are rational functions of $x$ satisfying \eqref{8.6}. This means that $\cP=\cL^{1/N}$ will satisfy the KP equations \eqref{8.5} for $j=2$ and $j=3$. Moreover, since the coefficients of $\cP$ are differential polynomials of the coefficients of $\cL$ with no constant terms, it follows that the coefficients of $\cP$ are also rational functions of $x$  that vanish at $x=\infty$. By Proposition~\ref{pr8.2}, there exists $W\in\Grad$ such that  $\cL^{1/N}=\cP=(\cP_W)|_{s_4=s_5=\cdots=0}$. Since $\cP^N$ is a differential operator, it follows that $W\in \Gr^{(N)}$ and by \leref{le8.3} we conclude that $W\in \Gr_0^{(N)}$, proving (ii). The implication (ii)$\Rightarrow$(i) follows easily from Proposition~\ref{pr8.2}.
\end{proof}

\section{Rank-one bispectral operators}\label{se9}

Suppose that $\cL$ is a monic differential operator that belongs to a rank-one commutative ring of differential operators. Using the notations in \seref{se6}, we know that there exists $W\in\Gr$ such that $\cL\in \cA_W$. Up to a factor depending only on $z$, the Baker function $\Psi_W(x,z)$ is the unique common eigenfunction for all operators in $\cA_W$ satisfying \eqref{6.2}. In view of the work of Duistermaat and Gr\"unbaum \cite{DG}, Wilson \cite{Wilson} proposed to call the ring $\cA_W$ {\em bispectral} if $\Psi_W(x,z)$ is also an eigenfunction of a differential operator $B(z,\pd_z)$ acting on the variable $z$, i.e.
\begin{equation*}
B(z,\pd_z) \Psi_W(x,z) =\theta(x) \Psi_W(x,z),
\end{equation*}
for some nonconstant function $\theta(x)$, and he classified all rank-one bispectral rings which are parametrized by the adelic Grassmannian $\Grad$. From his result and \leref{le8.3}, it follows easily that the sub-Grassmannian $\Gr_0$ parametrizes the operators in rank-one bispectral rings with coefficients vanishing at $\infty$.

\begin{Proposition}\label{pr9.1} 
For an $N$-th order operator $\cL=\pd_x^N+u_{N-2}(x)\pd_x^{N-2}+\cdots+u_0(x)$ the following conditions are equivalent.
\begin{enumerate}[\rm(i)]
\item $\cL$ belongs to a rank-one bispectral commutative ring and $\lim_{x\to\infty}u_{j}(x)=0$ for $j\in\{0,\dots,N-2\}$.
\item There exists $W\in\Gr_0^{(N)}$ such that $\cL=\cK_{z^N}$.
\end{enumerate}
\end{Proposition}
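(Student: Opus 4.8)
The plan is to derive both implications from Wilson's classification \cite{Wilson} of rank-one bispectral commutative rings, which, under the identification of $\Grad$ with a subset of $\Gr$ described in \seref{se8}, identifies them with the rings $\cA_W$ for $W\in\Grad$. Together with \leref{le8.3} and the coefficient computation already carried out in the proof of \thref{th6.1}, this reduces the statement to a couple of short observations.

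For the implication (i)$\Leftarrow$(ii): if $W\in\Gr_0^{(N)}$ then $W\in\Grad$, so $\cA_W$ is bispectral by Wilson's theorem; since $z^N\in A_W$ this ring contains the monic operator $\cL=\cK_{z^N}=(\cP_W)^N=\pd_x^N+O(\pd_x^{N-2})$, which has the normalized form \eqref{3.1}. That its coefficients vanish at $x=\infty$ is exactly what was established in the proof of the equivalence (ii)$\Leftrightarrow$(iii) in \thref{th6.1}: membership in $\Gr_0$ makes the tau-function a polynomial in finitely many time variables, so $\Psi_W$ and $\Psi_W^*$ have the finite expansions \eqref{6.6}, and formula \eqref{5.6} with $m=N$ gives $\lim_{x\to\infty}u_j(x)=0$.

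For the implication (i)$\Rightarrow$(ii): suppose $\cL$ belongs to a rank-one bispectral ring with $u_j\to0$ at $x=\infty$. By Wilson's theorem this ring is $\cA_{W_1}$ for some $W_1\in\Grad$, so $\cL=\cK_f$ for some $f\in A_{W_1}$ of order $N$. Writing $\cK_f=f(\cP_{W_1})$ for the wave operator $\cP_{W_1}$ of $W_1$ and using that $\cL$ has the normalized form \eqref{3.1}, one gets $f(z)=z^N+c_{N-2}z^{N-2}+\cdots+c_0$, hence
\begin{equation*}
\cL=\cP_{W_1}^N+c_{N-2}\cP_{W_1}^{N-2}+\cdots+c_0.
\end{equation*}
Since $W_1\in\Grad$, the non-leading coefficients of $\cP_{W_1}$ are rational functions of $x$ that vanish at $x=\infty$, as recalled in the proof of Proposition~\ref{pr8.2}; so the coefficient of $\pd_x^{j}$ in $\cL$ tends to $c_j$ as $x\to\infty$, and the hypothesis forces $c_j=0$ for all $j$. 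Therefore $\cL=\cP_{W_1}^N$, i.e.\ $\cL=\cK_{z^N}$ and $W_1\in\Gr^{(N)}$; by \leref{le8.3} we conclude $W_1\in\Grad\cap\Gr^{(N)}=\Gr_0^{(N)}$, which is (ii).

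The only substantive ingredient is Wilson's classification theorem; once it is in hand the argument is short, and the one point worth isolating is that the vanishing hypothesis on the $u_j$ is precisely what kills the lower-order constants $c_j$ and turns the a priori operator $\cK_f$ into $\cK_{z^N}$ --- without it one obtains only that $\cL$ equals $\cK_{z^N}$ modified by a constant-coefficient operator of order $\leq N-2$ together with a differential operator decaying at infinity. Alternatively, (i)$\Rightarrow$(ii) could be deduced from \thref{th8.1} after observing that the Gelfand--Dickey flows of $\cK_f$ are constant-coefficient combinations of the KP flows of $W_1$, but the route above is more economical.
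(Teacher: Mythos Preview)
Your proof is correct and follows essentially the same approach as the paper's: both directions rely on Wilson's classification of rank-one bispectral rings together with \leref{le8.3}, and the key step in (i)$\Rightarrow$(ii) is the observation that the lower-order constants in $f$ coincide with the limits $\lim_{x\to\infty}u_j(x)$, forcing $f(z)=z^N$. The only cosmetic difference is that the paper expresses $\cK_f$ as $S_W f(\pd_x)S_W^{-1}$ and argues via the vanishing of the $\psi_j$, while you write it as $f(\cP_{W_1})$ and argue via the vanishing of the non-leading coefficients of $\cP_{W_1}$; these are equivalent since $\cP_{W_1}=S_{W_1}\pd_x S_{W_1}^{-1}$.
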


\begin{proof}
The implication {\rm{(ii)}$\Rightarrow$\rm{(i)}} is clear since $\Gr_0^{(N)}\subset\Grad$, hence Wilson's result \cite{Wilson} implies that $\cA_W$ is a bispectral ring, and we know from \thref{th6.1} that the coefficients $u_j(x)$ vanish at $x=\infty$. Suppose now that (i) holds. Since the operator $\cL$ is normalized so that its leading two coefficients are the constants $1$ and $0$, it follows from the results in \cite{Wilson} that the operator $\cL$ belongs to a rank-one commutative algebra built from $W\in\Grad$, i.e.  there exist $W\in\Grad$ and a monic polynomial 
$$g(z)=z^N+\be_{N-2}z^{N-2}+\cdots+\be_0\in A_W,$$ 
such that $\cL=\cK_g$. This means that
\begin{equation}\label{9.1}
\cL \Psi_W(x,z)=g(z)\Psi_W(x,z),
\end{equation}
or equivalently 
\begin{equation}\label{9.2}
\cL =S_W g(\pd_x) S_W^{-1}.
\end{equation}
The pseudo-differential operator $S_W=1+\sum_{j=1}^{\infty}\psi_j(x)\pd_x^{-j}$ in \eqref{9.2} is obtained from the one in \eqref{8.2} by setting $s_1=x$ and $s_2=s_3=\cdots=0$. Since $W\in\Grad$, the coefficients $\psi_j(x)$ are rational functions of $x$, vanishing at $\infty$. This implies that, for every $j=0,1,\dots,N-2$, the coefficient of $\pd_x^j$ on the right-hand side of \eqref{9.2} tends to $\be_j$ as $x\to\infty$, which combined with \eqref{9.2} and (i) shows that $\lim_{x\to\infty} u_{j}(x)=\be_j=0$. Therefore $g(z)=z^N$, which means that $W\in\Gr^{(N)}$. From \leref{le8.3} we see that $W\in\Gr_0^{(N)}$, completing the proof.
\end{proof}

\begin{Remark}[Darboux transformations]\label{Re9.2}
Recall that if a monic differential operator $\cL_0$ is factored as $\cL_0=\cU_0\cV_0$ where $\cU_0$ and $\cV_0$ are monic differential operators, then the operator $\cL_1=\cV_0\cU_0$ obtained by exchanging the roles of $\cU_0$ and $\cV_0$ is called a Darboux transformation from $\cL_0$. Adler and Moser \cite{AM} discovered that all rational solutions $\cL=\pd_x^2+u$ of the KdV equation with potential $u$ vanishing at $x=\infty$ can be obtained by iterating the Darboux transformation 
\begin{align}
&\cL_0=\cU_0\cV_0\leadsto \cL_1=\cV_0\cU_0=\cU_1\cV_1
\leadsto \cL_2=\cV_1\cU_1=\cU_2\cV_2\leadsto \cdots                      \nonumber\\
&\quad \leadsto \cL_{n-1}=\cV_{n-2}\cU_{n-2}=\cU_{n-1}\cV_{n-1}\leadsto \cL_n=\cV_{n-1}\cU_{n-1},  \label{9.3}
                               \end{align}
starting with  the operator $\cL_0=\pd_x^2$.  Duistermaat and Gr\"unbaum \cite{DG} showed that the corresponding operators $\cL_n$ exhaust all rank-one bispectral Schr\"odinger operators, and used the Darboux transformation to describe all bispectral second-order operators. Moreover, Darboux transformations were used to produce bispectral commutative rings of arbitrary rank \cite{BHY,KR}. Similarly to the $N=2$ case, we can characterize the operators $\cL$ satisfying the equivalent conditions in Proposition~\ref{pr9.1} as the operators obtained from $\cL_0=\pd_x^N$ by iterating the Darboux transformation \eqref{9.3}. Indeed, if we set $\cL=\cL_n$ and $\cV=\cV_{n-1}\cV_{n-2}\cdots\cV_0$, then \eqref{9.3} implies that 
\begin{equation}\label{9.4}
\cL\cV = \cV\cL_0,\quad\text{ and }\quad \cL_0:\ker(\cV)\to \ker(\cV)\text{ is nilpotent.}
\end{equation}
Conversely, one can show by induction on the order of the operator $\cV$ that \eqref{9.4} implies that the operator $\cL$ can be obtained by a sequence of Darboux transformations from $\cL_0$ as in \eqref{9.3}. Note now that for $\cL_0=\pd_x^N$ the condition that $\cL_0|_{\ker(\cV)}$ is nilpotent is equivalent to $\ker(\cV)\subset\Cset[x]$. Combining this with \leref{le6.2}, we see that the operators $\cL$ satisfying \eqref{9.4} with $\cL_0=\pd_x^N$  and for some monic differential operator $\cV$ are precisely the operators of the form $\cL=\cK_{z^N}$ built from $W\in\Gr_0^{(N)}$. 
\end{Remark}

\section*{Acknowledgments} 
I would like to thank George Wilson for providing the direct and illuminating proof of \leref{le8.3}.

\end{document}